\definecolor{grey}{rgb}{0.7,0.7,0.7}
\newcounter{notes}%
\newcommand{\ignore}[1]{}
\theoremstyle{plain}
\newtheorem{theorem}{Theorem}
\newtheorem{proposition}[theorem]{Proposition}
\newtheorem{fact}[theorem]{Fact}
\newtheorem{lemma}[theorem]{Lemma}
\newtheoremstyle{theoremwithref}{}{}{\itshape}{}{\bfseries}{.}{.5em}{#1 #2 #3}
\theoremstyle{theoremwithref}
\theoremstyle{definition}
\newtheorem{conjecture}[theorem]{Conjecture}
\newtheorem{criterion}[theorem]{Criterion}
\numberwithin{theorem}{section}
\numberwithin{equation}{section}
\title{Strip maps of small surfaces are convex}
\author{Fran\c{c}ois Gu\'eritaud}
\address{CNRS and Universit\'e Lille 1, Laboratoire Paul Painlev\'e, 59655 Villeneuve d'Ascq Cedex, France
\newline Wolfgang-Pauli Institute, University of Vienna, CNRS-UMI 2842, Austria}
\email{francois.gueritaud@math.univ-lille1.fr}
\thanks{
Partially supported by the Agence Nationale de la Recherche under the grants DiscGroup (ANR-11-BS01-013) and ETTT (ANR-09-BLAN-0116-01), and through the Labex CEMPI (ANR-11-LABX-0007-01). 
}
\subjclass{57M50, 57M60}
\begin{document}

\begin{abstract}
The strip map is a natural map from the arc complex of a bordered hyperbolic surface $S$ to the vector space of infinitesimal deformations of $S$. We prove that the image of the strip map is a convex hypersurface when $S$ is a surface of small complexity: the punctured torus or thrice punctured sphere. 
\end{abstract}

\maketitle

\section{Introduction}
Let $S$ be a compact orientable surface of genus $g\geq 0$ with $p\geq 1$ boundary components, where $2g+p\geq 3$. The arc complex of $S$ is the complex $\overline{X}$ whose vertices are the isotopy classes of non-boundary-parallel embedded arcs in $S$ with endpoints in $\partial S$, and whose $(k-1)$-cells (for $2\leq k \leq 6g-6+3p=:N$) correspond to $k$-tuples of mutually nonisotopic arcs that can be embedded in $S$ disjointly. In this paper we study some realizations of $\overline{X}$ in $\mathbb{R}^N$ arising from hyperbolic geometry.

The top-dimensional cells of $\overline{X}$ correspond to so-called hyperideal triangulations of $S$, namely, collections of arcs subdividing $S$ into disks each of which is bounded by three segments of $\partial S$ and three arcs. Elements of $\overline{X}$ can always be represented in barycentric coordinates in the form $\sum_{i=1}^N \lambda_i\alpha_i$ where the $\lambda_i$ are nonnegative reals summing to $1$ and the $\alpha_i$ are arcs of a hyperideal triangulation. Note that $\overline{X}$ is infinite unless $S$ is the thrice punctured sphere.

A cell of $\overline{X}$ (of any dimension) is called \emph{small} if the arcs corresponding to its vertices \emph{fail} to decompose $S$ into disks. For example, vertices of $\overline{X}$ are small cells but top-dimensional cells are not. An important result of Harer and (independently) Penner \cite{harer, penner} is the following: \emph{the complement $X\subset\overline{X}$ of the union of all small cells is homeomorphic to an open $(N-1)$-ball}. Up to boundary effects, we may therefore think of the infinite complex $\overline{X}$ as (essentially) a ball.

It is an interesting question whether this triangulation of the ball can be realized by affine simplices in $\mathbb{R}^{N-1}$ as a tiling of, say, a convex region. One of the main results of \cite{dgk2} is an affirmative answer: 
\begin{proposition} \label{prop:stripconvex}
The projectivized \emph{strip map} (defined below) associated to a hyperbolic metric on $S$ restricts to an embedding of $X$ into $\mathbb{P}(\mathbb{R}^N)$, whose image is a convex open set with compact closure in some affine chart. 
\end{proposition}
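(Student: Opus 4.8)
The plan is to realize the image as the projectivization of a convex cone of uniformly lengthening infinitesimal deformations, and then to show the projectivized strip map is a homeomorphism onto it. Write $N=6g-6+3p$ and let $\RR^{N}$ be the space of infinitesimal deformations of the given hyperbolic structure --- the tangent space at $[S]$ to the Teichm\"uller space of hyperbolic structures on $S$ with geodesic boundary. For a geodesic arc $\alpha$ let $f_\alpha\in\RR^{N}$ be the infinitesimal strip deformation of unit width along $\alpha$, and extend linearly, $f_\mu=\sum_i\lambda_i f_{\alpha_i}$ for $\mu=\sum_i\lambda_i\alpha_i$; the projectivized strip map sends $\mu$ to $[f_\mu]\in\PP(\RR^{N})$, and I write $[f]$ for the resulting map. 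The computational input is the infinitesimal length--change formula: for every nonzero measured geodesic lamination $\lambda$ on $S$ (allowing weighted boundary leaves) and every arc $\alpha$,
\[
  d\ell_\lambda(f_\alpha)=\sum_{x\in\alpha\cap\lambda}c_x,\qquad 0<c_x\le C,
\]
where $c_x$ is an explicit positive, uniformly bounded function of the crossing angle; hence $d\ell_\lambda(f_\alpha)\ge 0$, with equality exactly when $\lambda$ misses $\alpha$, and $d\ell_\lambda(f_\alpha)\le C\,i(\lambda,\alpha)$. This is a finite hyperbolic--trigonometry computation in the universal cover, which I would either cite or rederive.

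Next I would introduce the admissible cone
\[
  \Adm=\{\,v\in\RR^{N}: d\ell_\lambda(v)>0 \text{ for every nonzero measured lamination }\lambda\,\},
\]
and record its elementary properties. It is convex, being an intersection of open half--spaces; it contains no line, since $v,-v\in\Adm$ would make some $d\ell_\lambda$ both positive and negative; and it is open, because after normalizing laminations to a compact slice of $\mathcal{ML}(S)\setminus\{0\}$ meeting each ray once, the differentials $d\ell_\lambda$ sweep out a compact set $K\subset(\RR^{N})^{*}$ with $0\notin K$ (no nonzero $\lambda$ annihilates all $f_\alpha$, by the formula and the fact that every nonzero lamination crosses some arc), so membership in $\Adm$ is the open condition $\langle c,\,\cdot\,\rangle>0$ for all $c\in K$. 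Therefore $\PP(\Adm)$ is a nonempty convex open subset of $\PP(\RR^{N})$; it is simply connected (being convex), and its closure, contained in $\PP(\overline{\Adm})$, is compact and lies in the affine chart complementary to $\PP(\ker c_0)$ for any $c_0$ in the interior of the dual cone of $\Adm$. Moreover the strip map lands there: if $\mu\in X$, then by the Harer--Penner description recalled above the support of $\mu$ fills $S$, so every nonzero $\lambda$ crosses some $\alpha_i$ with weight $\lambda_i>0$ and $d\ell_\lambda(f_\mu)=\sum_i\lambda_i d\ell_\lambda(f_{\alpha_i})>0$, i.e.\ $f_\mu\in\Adm$. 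Thus $[f]$ restricts to a continuous map $X\to\PP(\Adm)$, and it remains to prove it is a homeomorphism onto $\PP(\Adm)$; the conclusion of the proposition is then immediate from this and from the properties of $\PP(\Adm)$ just listed.

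To prove the homeomorphism I would establish two things. First, that $[f]\colon X\to\PP(\Adm)$ is a local homeomorphism: $f$ is affine on each open cell of $\overline{X}$, and one must show its linearization there is injective --- concretely, that the strip deformations along the arcs of any hyperideal triangulation form a basis of $\RR^{N}$ and that the image of an open top--cell avoids the origin --- and then that the affine pieces glued along a codimension--one wall (a Whitehead move) assemble \emph{without folding}, so that local injectivity propagates. Second, that $[f]$ is proper as a map into $\PP(\Adm)$: if $\mu_n\in X$ eventually leaves every compact subset of $X$ --- either because a weight tends to $0$ and the surviving support stops filling (a small cell is approached) or because the arc system recedes to infinity in $\overline{X}$ --- then, using compactness of $\PP\bigl(\mathcal{ML}(S)\bigr)$ and continuity of the geometric intersection number, I would extract a nonzero lamination $\lambda_\infty$ with $i\bigl(\lambda_\infty,\operatorname{supp}\mu_n\bigr)\to 0$; the formula then gives $d\ell_{\lambda_\infty}(f_{\mu_n})\to 0$, while on the compact slice $\{v\in\overline{\Adm}:\langle c_0,v\rangle=1\}$ one has $d\ell_{\lambda_\infty}>0$, so the normalized images accumulate only on $\{d\ell_{\lambda_\infty}=0\}\subset\partial\PP(\Adm)$ and $[f_{\mu_n}]$ escapes every compact subset of $\PP(\Adm)$. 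Granting both, $[f]\colon X\to\PP(\Adm)$ is a proper local homeomorphism onto a connected base, hence a covering map; as the base is simply connected and $X$ is connected and nonempty (an open ball, by Harer--Penner), the covering is trivial, so $[f]$ is a homeomorphism onto $\PP(\Adm)$.

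I expect the homeomorphism statement to be the crux, and within it the local--homeomorphism step to be the main obstacle --- in particular the ``no folding'' property where top--dimensional cells of $\overline{X}$ meet, and the basis property of strip deformations along a hyperideal triangulation, which are the places where the fine combinatorics of the arc complex must be matched to the convex geometry of $\Adm$. The properness step is a close second, since it requires a clean dictionary between sequences of weighted arc systems exiting $X$ (through small cells or to infinity in the infinite complex $\overline{X}$) and measured laminations whose intersection number against the support collapses; by contrast, the identification of $\PP(\Adm)$ as a bounded convex open subset of an affine chart and the containment $[f](X)\subseteq\PP(\Adm)$ are comparatively formal.
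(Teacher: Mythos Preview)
The paper does not prove Proposition~\ref{prop:stripconvex}: it is quoted as ``one of the main results of \cite{dgk2}'' and used as background. So there is no proof in this paper to compare your proposal against. That said, your outline is essentially the strategy carried out in \cite{dgk2}: identify the image with the projectivization of the cone of infinitesimal deformations that strictly lengthen all measured laminations, check that this cone is open, convex, and properly convex (contains no line), and then show the strip map is a proper local homeomorphism onto it, hence a homeomorphism by a covering argument over a simply connected base. You have also correctly located the hard steps (the ``no folding'' inequality at codimension-one walls, and properness).

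One technical point deserves correction. You write the length-change formula as $d\ell_\lambda(f_\alpha)=\sum_{x}c_x$ with $0<c_x\le C$, where $c_x$ depends only on the crossing angle. But the paper's formula \eqref{cosineformula} reads
\[
  d\ell_\gamma(\boldsymbol{f}(\alpha))=\sum_i \sin(\measuredangle_{q_i}(\alpha,\gamma))\,\cosh(r_i),
\]
so each summand also carries a $\cosh(r_i)$ factor measuring distance from the waist along $\alpha$. There is no uniform upper bound $C$ as $\alpha$ ranges over all arcs: long arcs give large contributions. Your properness argument as written (``$d\ell_\lambda(f_\alpha)\le C\,i(\lambda,\alpha)$, so $i(\lambda_\infty,\mathrm{supp}\,\mu_n)\to 0$ forces $d\ell_{\lambda_\infty}(f_{\mu_n})\to 0$'') therefore does not go through verbatim. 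The fix is to argue with \emph{ratios} after normalizing on the cone side (e.g.\ by $d\ell_{\partial S}$, which itself blows up with arc length via the same $\cosh$ factor), rather than with an absolute bound on the summands; this is how the escape to $\partial\PP(\Adm)$ is actually detected in \cite{dgk2}. The lower bound $c_x>0$, which is what you need for the containment $[f](X)\subset\PP(\Adm)$ and for openness of the cone, is of course fine.
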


\subsection{The strip map} \label{sec:stripmap}
Let $\mathcal{T}$ be the space of hyperbolic metrics on $S$ with totally geodesic boundary, seen up to isotopy. Then $\mathcal{T}$, also called the Teichm\"uller space, is diffeomorphic to an open $N$-ball. Let $g\in\mathcal{T}$ be a fixed metric and $x=\sum_{i=1}^N \lambda_i\alpha_i$ a point of $\overline{X}$. We consider for each arc $\alpha\in\overline{X}^{(0)}$ its geodesic representative in $(S,g)$, still denoted $\alpha$, that exits $\partial S$ perpendicularly: in particular, the (representatives of the) $\alpha_i$ are disjoint. Suppose moreover that for each $\alpha\in \overline{X}^{(0)}$ we are given a point $p_\alpha\in\alpha$, called the \emph{waist}.
To any reals $c_1\dots,c_N\geq 0$ we can then associate a deformation $\mathsf{Strip}\left ( g, \sum_{i=1}^N c_i \alpha_i \right )\in\mathcal{T}$, as follows:
\begin{itemize}
 \item Glue funnels to $\partial S$, turning $(S,g)$ into an infinite-area hyperbolic surface $S'$ without boundary;
 \item For each $1\leq i \leq N$, cut $S'$ open along the geodesic $\alpha_i'$ that extends~$\alpha_i$;
 \item Insert along $\alpha'_i$ a \emph{strip} of $\mathbb{H}^2$ of width $c_i$, i.e.\ the region bounded by two geodesics of $\mathbb{H}^2$ perpendicular to a segment of length $c_i$ at its endpoints. Make sure these endpoints become glued to the two copies of the waist $p_{\alpha_i}\in\alpha_i'$ obtained after cutting $\alpha_i'$  open.
 \item Define $\mathsf{Strip}\left ( g, \sum_{i=1}^N c_i \alpha_i \right )$ as the convex core of the new surface with $N$ strips inserted.
\end{itemize}

We may now define a continuous map associated to $g\in\mathcal{T}$ and to the chosen system of waists $(p_\alpha)_{\alpha\in\overline{X}^{(0)}}$:
$$ \begin{array}{rrcl} \boldsymbol{f}: & \overline{X} & \longrightarrow & T_{[g]}\mathcal{T} \\ 
& \sum_1^N \lambda_i \alpha_i & \longmapsto & \left . \frac{\mathrm{d}}{\mathrm{d}t} \right |_{t=0} 
\mathsf{Strip}\left ( g, \sum_{i=1}^N t\lambda_i \alpha_i \right ). \end{array} $$
This map $\boldsymbol{f}$, called the (infinitesimal) strip map, is the main object of interest in this paper. Its projectivization $f:\overline{X} \rightarrow \mathbb{P}(T_{[g]}\mathcal{T})\simeq \mathbb{P}(\mathbb{R}^N)$ is the projectivized strip map mentioned in Proposition \ref{prop:stripconvex}. The strip construction goes back at least to Thurston \cite{thu86a}; see also \cite{pt10}. 

Remarkably, the set $f(X)$ is actually independent of the choices of waists. In fact $f(X)$ coincides with the projectivization of the space of infinitesimal deformations of the hyperbolic metric on $S$ such that all closed geodesics become (in a strict sense) shorter to first order \cite{dgk2}. This has important consequences concerning the structure of the deformation space of Margulis spacetimes (quotients of $\mathbb{R}^{2,1}$ by free groups acting properly discontinuously), and motivates a more detailed study of $\boldsymbol{f}$.

\subsection{Convex hypersurfaces}
Proposition \ref{prop:stripconvex} can be rephrased thus: for any two top-dimensional simplices of $\overline{X}$ with vertex lists $(\alpha,\beta_1,\dots, \beta_{N-1})$ and $(\alpha',\beta_1,\dots, \beta_{N-1})$, there exist reals $A,A',B_1,\dots, B_N$ such that
\begin{itemize}
 \item $(\boldsymbol{f}(\alpha),\boldsymbol{f}(\beta_1),\dots, \boldsymbol{f}(\beta_{N-1}))$ 
 is a basis of $\mathbb{R}^N$;
 \item $\displaystyle{A\boldsymbol{f}(\alpha)+A'\boldsymbol{f}(\alpha')=\sum_{i=1}^{N-1} B_i\boldsymbol{f}(\beta_{i})}$;
 \item $\sum_{i=1}^{N-1} B_i >0$ and $A,A'>0$.
\end{itemize}
(The first two conditions already imply that $(A,A',B_1,\dots, B_{N-1})$ are unique up to scaling.)
The following conjecture appears in \cite{dgk2}:
\begin{conjecture} \label{conj:stripconvex}
For an appropriate choice of waists $(p_\alpha)_{\alpha\in\overline{X}^{(0)}}$, the image of $\boldsymbol{f}|_X$ in $T_{[g]}\mathcal{T}$ is a convex hypersurface, with codimension-1 edges looking salient from the origin. In other words (see Figure \ref{fig:convexity}), the numbers $A,A', B_i$ defined above satisfy the extra condition
$\displaystyle{A+A'<\sum_1^N B_i}$.
\end{conjecture}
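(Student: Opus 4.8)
The plan is to prove Conjecture~\ref{conj:stripconvex} in the two small cases: $S$ the thrice punctured sphere or the once punctured torus. In both $N=6g-6+3p=3$, so the top-dimensional cells of $\overline{X}$ are $2$-simplices carrying hyperideal triangulations of $S$ into two hexagonal pieces, and each pair of top-dimensional cells of $X$ sharing a codimension-$1$ face is a \emph{flip}, exchanging one arc $\alpha$ for an arc $\alpha'$ while keeping two arcs $\beta_1,\beta_2$. By the reformulation recorded just before the conjecture, together with Proposition~\ref{prop:stripconvex} --- which already supplies the embedding of $X$ onto a convex region, the basis property of $(\boldsymbol{f}(\alpha),\boldsymbol{f}(\beta_1),\boldsymbol{f}(\beta_2))$, and the signs $A,A'>0$, $B_1+B_2>0$ in the relation $A\boldsymbol{f}(\alpha)+A'\boldsymbol{f}(\alpha')=B_1\boldsymbol{f}(\beta_1)+B_2\boldsymbol{f}(\beta_2)$ --- the whole conjecture reduces to the single inequality $A+A'<B_1+B_2$ at every flip. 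I would actually establish the stronger statement in which $S$ is small but the hyperbolic metric is arbitrary; proving that ``all metrics'' version is what makes the symmetry reduction below effective.

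Next I would normalise the waists in a mapping-class-group-equivariant way --- the natural candidate being $p_\alpha=$ the midpoint of the geodesic representative of $\alpha$ that meets $\partial S$ orthogonally --- so that a mapping class $\phi$ intertwines the strip map of $(S,g)$ with that of $(S,\phi^*g)$ and carries flips to flips. For the thrice punctured sphere, $\overline{X}$ is a finite complex on which $\mathfrak{S}_3$ acts by permuting boundary components, so there are only finitely many flips up to symmetry. For the once punctured torus, $\overline{X}$ is the Farey complex and $\mathrm{SL}_2(\mathbb{Z})$ acts transitively on its edges, so it is enough to check the inequality at one standard flip. In every case one is left with a bounded list of flips, each to be examined over the family of all metrics, i.e.\ over $\mathcal{T}\simeq\mathbb{R}^3$.

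Now the computation. Fix simple closed curves whose length differentials span $T^{*}_{[g]}\mathcal{T}$ --- the three boundary lengths for the pair of pants; the lengths of the slope-$0$ and slope-$\infty$ curves together with the boundary length for the once-holed torus --- and use them to identify each $\boldsymbol{f}(\delta)\in T_{[g]}\mathcal{T}$ with the triple of its pairings against these curves. The one analytic input is the first-variation formula for strip deformations: cutting along the geodesic extending $\delta$ and reglueing through a hyperbolic translation of the two sides, along the perpendicular to $\delta$ at $p_\delta$, shows that inserting an infinitesimal unit strip along $\delta$ changes the length of a closed geodesic $\gamma$ by (up to a fixed global sign) $\sum_{q\in\gamma\cap\delta}\ccosh(d(q,p_\delta))\,\sin\theta_q$, where $\theta_q$ is the crossing angle at $q$ and $d(q,p_\delta)$ the distance along $\delta$ from $q$ to the waist; a closed geodesic meets the complete geodesic extending $\delta$ only inside the convex core, hence only along $\delta$, so this sum is finite. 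All the angles and distances entering this formula are read off from the right-angled hexagon presentation of the pair of pants, respectively from the description of the once-holed torus as a pair of pants with two boundary components glued with a twist. Substituting them into the three scalar equations equivalent to $A\boldsymbol{f}(\alpha)+A'\boldsymbol{f}(\alpha')=B_1\boldsymbol{f}(\beta_1)+B_2\boldsymbol{f}(\beta_2)$ yields a $3\times 4$ linear system determining $(A,A',B_1,B_2)$ up to a common scale, which we fix using the signs from Proposition~\ref{prop:stripconvex}.

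The remaining point --- and the main obstacle --- is to verify $A+A'<B_1+B_2$ for all metrics in the family. After the substitutions this is an explicit inequality between hyperbolic-trigonometric functions of two or three real parameters, and it is exactly here that the choice of waists intervenes: a careless choice makes the inequality fail, which is why the conjecture is stated only for an ``appropriate'' one. The expectation is that the midpoint choice collapses the inequality to a manifestly positive expression; should the algebra not settle it outright, I would treat the degenerations separately (the cusp limit of $\partial S$, and the pinching limits of the chosen curves) and fill in the interior by a monotonicity argument or by a compactness/continuity argument on the projectivised parameter space. The earlier steps are conceptually routine given the first-variation formula and Proposition~\ref{prop:stripconvex}; essentially all the difficulty is concentrated in making this last trigonometric inequality transparent and in confirming that the natural waist is the correct one.
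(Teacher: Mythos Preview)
Your overall strategy---reduce to the inequality $A+A'<B_1+B_2$ at each flip, exploit mapping-class-group symmetry to cut down to one flip per surface, and then verify a trigonometric inequality over all metrics---is sound, and for the once-punctured torus the midpoint waist is indeed correct (it coincides with $\alpha\cap\delta$ by the hyperelliptic symmetry). But there is a genuine gap for the thrice punctured sphere: the midpoint waist \emph{fails}, and not merely because the algebra is untidy.

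The paper exhibits a counterexample. Take a metric with one very short boundary loop $a$ and two loops $b,c$ of moderate length. The arc $\alpha=bc$ has bounded length, but $\beta=ca$ and $\gamma=ab$ are very long, so their midpoints lie far from $b,c$. By the sine formula \eqref{cosineformula}, $\boldsymbol{f}(\beta)$ and $\boldsymbol{f}(\gamma)$ lengthen $b$ at a huge rate (factor $\cosh r$ with $r\gg 1$), while $\boldsymbol{f}(\alpha)$ lengthens it at a bounded rate and $\boldsymbol{f}(\delta)$ (with $\delta=aa$) not at all. Balancing the $\mathrm{d}\ell(b)$-component of \eqref{eq:vanish} then forces $A$ to be huge, so $A+D>B+C$. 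Thus your hedging clause (``should the algebra not settle it outright\dots'') cannot rescue the argument: no amount of degeneration analysis will prove an inequality that is false in the interior of $\mathcal{T}$.

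The correct waist on the thrice punctured sphere is $p_\alpha:=\alpha\cap\delta$, which is the midpoint of $\delta$ but \emph{not} of $\alpha$; equivalently (Lemma~\ref{lem:bisector}) it is the foot of the height of the hyperideal triangle $\alpha\beta\gamma$. With that choice, the paper does not pair against $\mathrm{d}\ell(a),\mathrm{d}\ell(b),\mathrm{d}\ell(c)$ and solve a linear system as you propose; instead it assigns an explicit Killing field to each tile of the lifted triangulation (Criterion~\ref{crit:killing}) so that the edge increments read off $A,B,C,D$ directly. The bisector lemma makes these increments expressible via the half-angles $\widehat{a},\widehat{b},\widehat{c}$ of the pedal triangle, giving $A=2\cos\widehat{a}$, $D=2\sin\widehat{a}$, $B=2\cos\widehat{b}$, $C=2\cos\widehat{c}$, and $A+D<B+C$ becomes a one-line consequence of $\widehat{a}+\widehat{b}+\widehat{c}<\pi/2$. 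The same Killing-field method handles the once-punctured torus (and its cone and cusp variants). Your length-differential route could in principle be pushed through once the waists are corrected, but the resulting $3\times 4$ system would be much less transparent than the Killing-field computation.
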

\begin{figure}[ht!]
\labellist
\small\hair 2pt
\pinlabel {$\boldsymbol{f}(\alpha)$} at -4 74
\pinlabel {$\boldsymbol{f}(\alpha')$} at 163 73
\pinlabel {$\boldsymbol{f}(\beta_1)$} at 53 31.5
\pinlabel {$\boldsymbol{f}(\beta_2)$} at 84 74
\pinlabel {$0$} at 77 1
\endlabellist
\centering
\includegraphics[width=7cm]{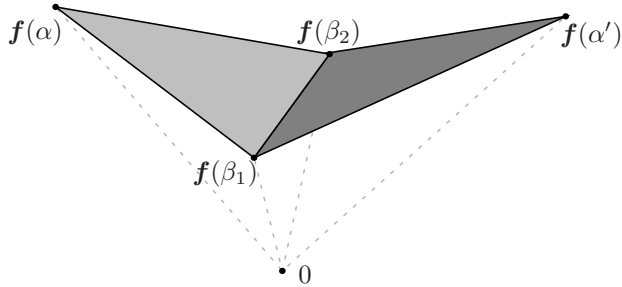}
\caption{A convex hypersurface in $\mathbb{R}^3$.}
\label{fig:convexity} 
\end{figure}
Since $X$ is dense in $\overline{X}$, restriction to $X$ is inessential in Conjecture \ref{conj:stripconvex}; it is only meant to ensure the image is a (noncomplete) topological submanifold.
Conjecture \ref{conj:stripconvex} would give a realization of $X$ within the simplicial decomposition arising from the convex hull of a discrete set $\boldsymbol{f}(\overline{X}^{(0)})$. It is not clear a priori that such convex realizations should exist, even given Proposition \ref{prop:stripconvex}.

Note that Conjecture \ref{conj:stripconvex} has a well-studied finite counterpart: the complex of diagonal subdivisions of a (finite, planar, convex) $n$-gon is finite, and is realized as the cell decomposition of the (dual) \emph{associahedron}, a now classical polytope in $\mathbb{R}^{n-3}$: see for example \cite{loday} and the references therein. In this note, we prove

\begin{theorem} \label{thm:main}
Conjecture \ref{conj:stripconvex} is true for $S$ a once punctured torus or a thrice punctured sphere.
\end{theorem}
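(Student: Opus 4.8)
The crucial simplification is that both surfaces have $N=3$, so $\boldsymbol f$ lands in $T_{[g]}\mathcal T\cong\mathbb R^3$ and, for each pair of adjacent top-dimensional cells of $\overline X$, only the single scalar inequality $A+A'<\sum_i B_i$ needs to be checked: the companion sign conditions $A,A'>0$ and $\sum_i B_i>0$ are handed to us for free by Proposition \ref{prop:stripconvex}. Moreover the symmetries of each case cut the verification down to essentially one inequality. For the thrice-punctured sphere $\overline X$ is finite — six vertices (the three arcs joining distinct boundary components and the three arcs joining a boundary component to itself), four top-dimensional cells (one ``central'' ideal triangulation and the three triangulations obtained from it by a flip), and three interior codimension-$1$ faces, which the $S_3$ acting on the boundary components permutes transitively. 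For the once-punctured torus $\overline X$ is the Farey complex; its mapping class group $\cong\mathrm{GL}(2,\mathbb Z)$ acts on it transitively on codimension-$1$ faces and equivariantly on $\mathcal T$. So, provided the waists $(p_\alpha)$ are chosen by a rule that is natural — hence equivariant under these symmetries — it is enough to verify the inequality for one adjacent pair $(\alpha,\beta_1,\beta_2)\leftrightarrow(\alpha',\beta_1,\beta_2)$, but \emph{uniformly in the metric}: over $(\ell_1,\ell_2,\ell_3)\in(\mathbb R_{>0})^3$ for the pair of pants, and over all of $\mathcal T\cong\mathbb R^3$ for the punctured torus.

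The next step is to make $\boldsymbol f$ explicit. I would coordinatize $\mathcal T$ by geodesic lengths: the three boundary lengths for the pair of pants, and the boundary length together with two further simple closed geodesics for the punctured torus. Then $\boldsymbol f(\alpha)$ is read off from the first-order variation of these length functions under the strip deformation along $\alpha$, for which there is a classical formula (going back to Thurston and used in \cite{dgk2}): the derivative of $\ell_\gamma$ is a sum, over the intersection points of $\gamma$ with the complete geodesic extending $\alpha$, of a positive weight that depends only on the crossing angle and on the position of the intersection point relative to the waist $p_\alpha$. Plugging in the configuration of the four arcs occurring in the chosen face, and evaluating the weights by right-angled-hexagon trigonometry, produces closed forms for $\boldsymbol f(\alpha),\boldsymbol f(\alpha'),\boldsymbol f(\beta_1),\boldsymbol f(\beta_2)$ as explicit vectors in $\mathbb R^3$ depending on the length parameters and on the as-yet-unspecified waist positions.

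From this $3\times4$ matrix the tuple $(A,A',B_1,B_2)$ is determined up to scale by the basis condition, and $A+A'<B_1+B_2$ turns into one concrete inequality between elementary functions of the length parameters and the waist positions. The last step is to fix the waist rule so that this inequality holds on the whole parameter range — ideally by recasting it as a manifestly valid comparison (monotonicity of the weights in the waist position, convexity of $t\mapsto\log\cosh t$, an AM--GM estimate, or the like) — and to confirm that it survives the degenerations of $(S,g)$ in which some length tends to $0$ or $\infty$.

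I expect the genuine obstacle to lie in the interaction of these last two tasks. The conjecture is asserted only for an \emph{appropriate} choice of waists, and naive prescriptions (e.g.\ always taking the midpoint of the geodesic arc) should not be expected to work: pinning down the right balancing prescription for $p_\alpha$ — presumably the one forcing the two half-strips to contribute symmetrically — is the conceptual heart of the argument, and once chosen it must render the single inequality valid \emph{uniformly}. For the punctured torus this means uniformly over the whole of $\mathcal T$, so the delicate point is to keep control of the comparison near the frontier of moduli space, where some of the weights degenerate; I would expect this uniform estimate, rather than any individual computation, to be where the real work sits.
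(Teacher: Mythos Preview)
Your outline is a coherent program and the symmetry reduction is sound, but it stops precisely at the two steps that constitute the proof: you neither specify the waist rule nor derive and verify the inequality. Worse, your one guess at the waist prescription (``the one forcing the two half-strips to contribute symmetrically'') points toward the midpoint, which the paper explicitly shows \emph{fails} for the pair of pants: when one boundary component is short, the midpoints of the two long arcs $\beta,\gamma$ sit far from the other boundaries, so by the sine formula $\boldsymbol f(\beta),\boldsymbol f(\gamma)$ have huge components along $\mathrm d\ell_b,\mathrm d\ell_c$ while $\boldsymbol f(\alpha),\boldsymbol f(\delta)$ do not, forcing $A\gg 1$ and $A+D>B+C$. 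The correct prescription in both cases is $p_\alpha:=\alpha\cap\delta$, the intersection of each arc with its flip; for the punctured torus this \emph{happens} to be the midpoint (hyperelliptic symmetry), but for the pair of pants it is not.

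The paper also takes a different computational route from yours. Rather than evaluating $\boldsymbol f$ in length coordinates via the sine formula and then solving a $3\times 4$ linear system for $(A,A',B_1,B_2)$, it uses a Killing-field criterion: a linear relation $A\boldsymbol f(\alpha)+D\boldsymbol f(\delta)=B\boldsymbol f(\beta)+C\boldsymbol f(\gamma)$ is equivalent to an equivariant, vertex-consistent assignment of Killing fields to the tiles of the $(\alpha,\beta,\gamma,\delta)$-subdivision, with edge increments the infinitesimal strip translations. One then \emph{writes down} such an assignment directly in $\mathfrak{psl}_2(\mathbb R)\simeq\mathbb R^{2,1}$ (for the torus, combinations of the unit vectors dual to the boundary lines; for the pair of pants, unit translations along the heights of the hyperideal triangle, after a lemma that these heights are the inner angle bisectors of the pedal triangle). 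The velocities $A,B,C,D$ are then read off from Fact~\ref{fact:classical} as norms of explicit Minkowski vectors, and the convexity inequality becomes a clean trigonometric statement: $\cos\widehat a+\sin\widehat a<\cos\widehat b+\cos\widehat c$ for the pair of pants (with $2\widehat a,2\widehat b,2\widehat c$ the angles of a hyperbolic triangle), and $\frac{\sinh(a+d)}{\sinh a+\sinh d}<\cosh b+\cosh c$ for the torus. This bypasses both the linear algebra you propose and the boundary-of-moduli-space analysis you worry about: the inequalities are proved uniformly by one-line concavity/AM--GM arguments. Your approach via the sine formula in length coordinates might be pushed through, but would be substantially heavier and would still require you to discover the waist prescription $p_\alpha=\alpha\cap\delta$ from outside the computation.
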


The proof will be a rather explicit computation. The once punctured torus and the thrice punctured sphere are called the \emph{small} (orientable) surfaces; their arc complexes are planar triangle complexes recalled in Section \ref{sec:trees}. As these complexes are dual to trees, it is not hard to realize them in the boundaries of convex (finite or infinite) polyhedra of $\mathbb{R}^3$, so Theorem \ref{thm:main} is not a new \emph{realizability} result. However, 
\begin{itemize}
 \item It is interesting to note that the strip map gives a natural realization.
 \item In the case of the punctured torus, we can extend Theorem \ref{thm:main} to singular hyperbolic metrics (Theorem \ref{thm:ellip}), replacing the boundary component with a cone point of angle $\theta\in (0,2\pi)$. Proposition \ref{prop:stripconvex} was already extended to that singular context in \cite{PT-note}. Theorem \ref{thm:parab} also treats the intermediate case of a cusped metric ($\theta=0$).
 \item In the case of the thrice punctured sphere, we will see that a naive choice of waists, such as the midpoints of the arcs, does in general \emph{not} work for Conjecture \ref{conj:stripconvex}. This could shed light on the general~case.
\end{itemize}

\subsection{Plan} Section \ref{sec:background} contains reminders about the geometry of strip deformations, the arc complexes of the small surfaces, and hyperbolic geometry (Killing fields and the Minkowski model). Section \ref{sec:S03} proves Theorem \ref{thm:main} for the thrice punctured sphere, and Section \ref{sec:S11} for the once punctured torus.

\section{Background} \label{sec:background}
\subsection{The sine formula}
To estimate the effect of a strip deformation on the metric of $S$, it is convenient to compute how it affects the lengths of various geodesics. Here we give a formula: the proof is similar to the classical \emph{cosine formula} for earthquake deformations \cite{ker83}, and can be found in \cite[\S2.1]{dgk2}.

For simplicity, we restrict to strip deformations $\boldsymbol{f}(\alpha)$ along a single arc~$\alpha$: the general case $\boldsymbol{f}(\sum_1^N \lambda_i \alpha_i)$ is then recovered by linearity. Let $\gamma \subset S$ be a closed geodesic, and $\mathrm{d}\ell_\gamma:T_{[g]}\mathcal{T}\rightarrow \mathbb{R}$ the differential of its length function. Suppose that $\gamma$ intersects $\alpha$ at points $q_1,\dots, q_n$ lying at distances $r_1,\dots, r_n\geq 0$ from the waist $p_\alpha$, measured along the arc $\alpha$. Then
\begin{equation}
 \label{cosineformula}
 \mathrm{d}\ell_\gamma(\boldsymbol{f}(\alpha))=\sum_{i=1}^{n} \sin ( \measuredangle_{q_i}(\alpha, \gamma)) \, \cosh (r_i) 
\end{equation}
where $\measuredangle_{q_i}(\alpha, \gamma) \in (0,\pi)$ denotes the angle, at the point $q_i$, between the directions of $\alpha$ and $\gamma$.

This formula shows for example that a strip deformation along a very long arc $\alpha$ will have a huge lengthening effect on the boundary length of $S$ --- more precisely, on the lengths of the boundary components of $S$ that $\alpha$ intersects but that lie far away from the waist $p_\alpha$.

\subsection{Arc complexes of small surfaces}\label{sec:trees}
In a (hyperideal) triangulation~$\tau$ of the surface $S$, whenever an arc $\alpha$ separates two distinct regions, removing $\alpha$ creates a hyperideal quadrilateral of which $\alpha$ was a diagonal. The triangulation obtained by inserting back the other diagonal is called the \emph{diagonal flip} of $\tau$ at $\alpha$. Two distinct top-dimensional faces of the arc complex $\overline{X}$ share a codimension-1 face exactly when the two corresponding triangulations of $S$ are related by a diagonal flip.
\subsubsection{The thrice punctured sphere}\label{sec:arcs03}
The thrice punctured sphere $S$ has one triangulation $\tau$ obtained by connecting all pairs of distinct punctures together. It also has three more triangulations, obtained from $\tau$ by flipping one of its $3$ edges. In total, the arc complex $\overline{X}$ has $6$ vertices, $9$ one-cells ($3$ of them inner), and $4$ two-cells (the triangulations). The full mapping class group of $S$ has order 12 and projects to the automorphism group of~$\overline{X}$, which is the order-6 dihedral group. The kernel is the reflection of $S$ preserving the arcs of $\tau$ pointwise. The dual of $\overline{X}$ is a $3$-branched star. See Figure \ref{fig:smallsurf}.
\subsubsection{The once punctured torus}
Up to the action of the mapping class group $\mathrm{GL}_2(\mathbb{Z})$, the punctured torus $S$ of interior $\simeq (\mathbb{R}^2\smallsetminus \mathbb{Z}^2)/\mathbb{Z}^2$ has only one hyperideal triangulation, obtained e.g.\ by projecting to $S$ the three segments of $\mathbb{R}^2\smallsetminus \mathbb{Z}^2$ connecting the origin to $(1,0)$, $(0,1)$, and $(1,1)$. The resulting arc complex $\overline{X}$ is dual to an infinite planar trivalent tree, with one vertex for each rational number $p/q\in\mathbb{P}^1(\mathbb{Q})$ (corresponding to the segment from the origin to $(p,q)$). The mapping class group maps onto the automorphism group of $\overline{X}$, with kernel $\{\mathrm{Id},-\mathrm{Id}\}$. See Figure \ref{fig:smallsurf}.
\begin{figure}[ht!]
\labellist
\small\hair 2pt
\pinlabel {$\tau$} at 136 96
\endlabellist
\centering
\includegraphics[width=12cm]{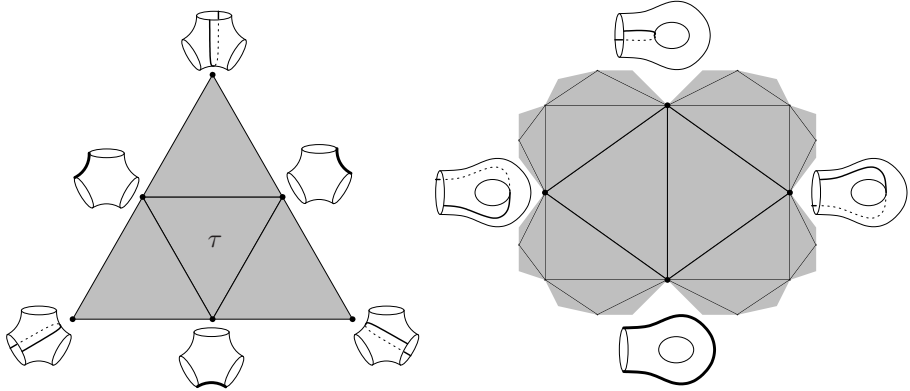}
\caption{The arc complexes $\overline{X}$ of the two small surfaces.}
\label{fig:smallsurf} 
\end{figure}

\subsection{Lorentzian geometry}
We see $G:=\mathrm{PSL}_2(\mathbb{R})$ as the isometry group of the hyperbolic plane $\mathbb{H}^2$, and the Lie algebra $\mathfrak{g}:=\mathfrak{psl}_2(\mathbb{R})$ as the space of Killing vector fields on $\mathbb{H}^2$. The Killing form on $\mathfrak{g}$, multiplied by $\frac{1}{2}$, makes $\mathfrak{g}$ isometric to Minkowski space $(\mathbb{R}^{2,1},\langle \cdot |\cdot \rangle)$. Viewing $\mathbb{H}^2$ as one sheet (call it ``future'') of the unit hyperboloid of $\mathfrak{g}$, we can then identify the isometry action of $G$ on $\mathbb{H}^2$ with the adjoint action. For $\mathcal{Y}\in \mathfrak{g}$, we write $\Vert \mathcal{Y} \Vert :=\sqrt{\langle \mathcal{Y} | \mathcal{Y} \rangle}$ and let $d_{\mathbb{H}^2}$ be the hyperbolic distance function.
\begin{fact} \label{fact:classical}
The following are classical:
\begin{enumerate}
 \item If $\mathcal{Y},\mathcal{Z}\in\mathbb{H}^2\subset\mathfrak{g}$ then 
 $\Vert \mathcal{Y}-\mathcal{Z} \Vert = 2\sinh (d_{\mathbb{H}^2}(\mathcal{Y},\mathcal{Z})/2)$.
 \item If $\mathcal{Y},\mathcal{Z}\in\mathfrak{g}$ satisfy $\Vert \mathcal{Y} \Vert^2 = \Vert \mathcal{Z} \Vert^2 = 1$ and the hyperbolic half-planes $P_\mathcal{Y}:=\{u\in \mathbb{H}^2\, |~ \langle u|\mathcal{Y} \rangle \geq 0\}$ and $P_\mathcal{Z}:=\{u\in \mathbb{H}^2\, |~ \langle u|\mathcal{Z} \rangle \geq 0\}$ are disjoint, then $\Vert \mathcal{Y}-\mathcal{Z} \Vert = 2\cosh (d_{\mathbb{H}^2}(P_\mathcal{Y},P_\mathcal{Z})/2)$.
 \item If $\mathcal{Y},\mathcal{Z}\in\mathfrak{g}$ are future-pointing lightlike (i.e.\ isotropic) vectors representing ideal points $y,z\in \partial_\infty \mathbb{H}^2$, a Killing field $\mathcal{U}\in\mathfrak{g}$ belongs to $\mathbb{R}^{>0} \mathcal{Y} - \mathbb{R}^{>0} \mathcal{Z}$ if and only if $\mathcal{U}$ represents an infinitesimal translation of axis perpendicular to the hyperbolic line $yz$, with $y$ to the left and $z$ to the right of the axis. The velocity of that Killing field along its axis is then just $\Vert \mathcal{U} \Vert$.
\end{enumerate}
\end{fact}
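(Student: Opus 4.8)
All three items are standard identities of Lorentzian and hyperbolic geometry, and the plan is to reduce them to a short dictionary between the Minkowski form $\langle\cdot|\cdot\rangle$ on $\mathfrak g\cong\mathbb R^{2,1}$ and hyperbolic distances. The decisive reduction is that the adjoint action preserves $\langle\cdot|\cdot\rangle$ and realizes $G$ as $\mathrm{Isom}^+(\mathbb H^2)$, hence acts transitively on: pairs of points of $\mathbb H^2$ at a prescribed distance; (point, geodesic) pairs at a prescribed distance; pairs of ultraparallel half-planes at a prescribed distance; and ordered pairs of distinct ideal points. Every quantity occurring in Fact~\ref{fact:classical} is $\Ad(G)$-invariant, so each identity need only be verified in one normal-form configuration, via a $2\times2$ matrix computation in $\mathfrak{sl}_2(\mathbb R)$ --- say in the orthogonal frame given by a traceless diagonal generator, a symmetric off-diagonal generator, and a rotation generator, the first two spacelike and the last, after rescaling, lying in $\mathbb H^2$, so that $\mathbb H^2=\{\mathcal Y:\langle\mathcal Y|\mathcal Y\rangle=-1\}\cap\{\text{future}\}$.

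The dictionary I would record has four items, each a single normal-form computation. (i) For $\mathcal Y,\mathcal Z\in\mathbb H^2$: $\langle\mathcal Y|\mathcal Z\rangle=-\cosh d_{\mathbb H^2}(\mathcal Y,\mathcal Z)$. (ii) For a unit spacelike vector $\mathcal Y$: the set $\ell_\mathcal Y:=\{u\in\mathbb H^2:\langle u|\mathcal Y\rangle=0\}$ is a geodesic (the dual geodesic of the ray $\mathbb R^{>0}\mathcal Y$), and $\langle u|\mathcal Y\rangle=\pm\sinh d_{\mathbb H^2}(u,\ell_\mathcal Y)$ with sign $+$ exactly when $u\in P_\mathcal Y$. (iii) For unit spacelike $\mathcal Y,\mathcal Z$ with $P_\mathcal Y\cap P_\mathcal Z=\varnothing$ (equivalently, $\ell_\mathcal Y$ and $\ell_\mathcal Z$ ultraparallel with the half-planes on opposite sides): $\langle\mathcal Y|\mathcal Z\rangle=-\cosh d_{\mathbb H^2}(P_\mathcal Y,P_\mathcal Z)$. (iv) For future lightlike $\mathcal Y,\mathcal Z$ representing distinct ideal points: $\langle\mathcal Y|\mathcal Z\rangle<0$ (reverse Cauchy--Schwarz for the null cone), with equality exactly for positively proportional vectors.

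Items (1) and (2) then fall out by expanding $\Vert\mathcal Y-\mathcal Z\Vert^2=\langle\mathcal Y|\mathcal Y\rangle+\langle\mathcal Z|\mathcal Z\rangle-2\langle\mathcal Y|\mathcal Z\rangle$. For (1), $\langle\mathcal Y|\mathcal Y\rangle=\langle\mathcal Z|\mathcal Z\rangle=-1$ and item (i) give $\Vert\mathcal Y-\mathcal Z\Vert^2=2\cosh d-2=4\sinh^2(d/2)$ with $d=d_{\mathbb H^2}(\mathcal Y,\mathcal Z)$; for (2), $\langle\mathcal Y|\mathcal Y\rangle=\langle\mathcal Z|\mathcal Z\rangle=1$ and item (iii) give $\Vert\mathcal Y-\mathcal Z\Vert^2=2+2\cosh\delta=4\cosh^2(\delta/2)$ with $\delta=d_{\mathbb H^2}(P_\mathcal Y,P_\mathcal Z)$. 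In both cases $\mathcal Y-\mathcal Z$ is spacelike, so taking positive square roots gives exactly the claimed formulas.

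For item (3), write $\mathcal U=a\mathcal Y-b\mathcal Z$ with $a,b>0$; then $\langle\mathcal U|\mathcal U\rangle=-2ab\,\langle\mathcal Y|\mathcal Z\rangle>0$ by item (iv), so $\mathcal U$ is a nonzero spacelike Killing field, i.e.\ an infinitesimal hyperbolic translation, with axis the dual geodesic $\ell_\mathcal U=\{u:\langle u|\mathcal U\rangle=0\}$; since $\mathcal U\in\spa(\mathcal Y,\mathcal Z)$ while the unit normal of the line $yz$ spans the $\langle\cdot|\cdot\rangle$-orthogonal complement of $\spa(\mathcal Y,\mathcal Z)$, this axis meets $yz$ orthogonally. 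The remaining assertions --- that it is $a\mathcal Y-b\mathcal Z$, not $b\mathcal Z-a\mathcal Y$, whose translation carries $y$ to the left and $z$ to the right; that the velocity equals $\Vert\mathcal U\Vert$; and the converse implication --- reduce to one model computation: with $y=\infty$, $z=0$ in the upper half-plane (so $\ell_\mathcal U$ is a semicircle centered at $0$) and $\mathcal Y,\mathcal Z$ the future-pointing parabolic generators fixing $\infty$ and $0$, one finds $\exp(t\mathcal U)$ hyperbolic with repelling and attracting fixed points $\mp\sqrt{a/b}$ --- which fixes the translation direction and puts $y$ on the left --- and translation length per unit $t$, read off from the eigenvalues $e^{\pm t\sqrt{ab}}$, equal to $\Vert\mathcal U\Vert$, precisely because of the $\tfrac12$-normalization of the Killing form. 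The converse runs the same computation backwards, using that a spacelike $\mathcal U$ with axis orthogonal to $yz$ must lie in $\spa(\mathcal Y,\mathcal Z)$, hence equals $a\mathcal Y-b\mathcal Z$ with $ab>0$, the two orientations distinguishing $a,b>0$ from $a,b<0$. The only genuine work is this handful of small computations; the sole delicate point is the sign and left/right bookkeeping in item (3), together with keeping the $\tfrac12$-Killing-form normalization consistent throughout, and that is where I expect the --- entirely mechanical --- effort to go.
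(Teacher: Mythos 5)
The paper gives no proof of Fact~\ref{fact:classical} --- it is asserted, and used, as classical --- so there is nothing in the paper to compare against. What you have written is a correct and fairly complete reconstruction of the standard Lorentzian argument, and I see no gaps.

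Two remarks. First, the decisive normalization check is indeed where the content lies, and you have it right: with $\langle X|Y\rangle=\tfrac12 B(X,Y)=2\,\tr(XY)$ on $\mathfrak{sl}_2(\mathbb R)$, the rotation generator $R=\tfrac12\bigl(\begin{smallmatrix}0&1\\-1&0\end{smallmatrix}\bigr)$ satisfies $\langle R|R\rangle=-1$, so $\mathbb H^2$ really is the sheet $\langle\mathcal Y|\mathcal Y\rangle=-1$ as claimed; and for $\mathcal U=\bigl(\begin{smallmatrix}0&a\\b&0\end{smallmatrix}\bigr)$ with $a,b>0$ one gets $\Vert\mathcal U\Vert=2\sqrt{ab}$, which is exactly the translation velocity $\tfrac{d}{dt}\bigl(2t\sqrt{ab}\bigr)$ of $\exp(t\,\mathcal U)$. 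That this coincidence depends precisely on the $\tfrac12$ factor is worth pointing out, as you do. Second, a small stylistic caution: the phrase ``the eigenvalues $e^{\pm t\sqrt{ab}}$ \dots equal to $\Vert\mathcal U\Vert$'' is slightly garbled --- you mean that the translation length $2t\sqrt{ab}$ read off from those eigenvalues has derivative $\Vert\mathcal U\Vert$ --- but the intent is clear. The orthogonality argument for item (3), via $\mathcal U\in\operatorname{span}(\mathcal Y,\mathcal Z)\perp\mathcal W$ where $\ell_{\mathcal W}=yz$, and the sign/side bookkeeping via the normal form $y=\infty$, $z=0$, are both correct. Your proof fills in what the paper leaves implicit; nothing more is needed.
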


\subsection{Convexity criterion}
We can use Killing fields to express the local convexity of the hypersurface $\boldsymbol{f}(X)$ at a codimension-1 face, as follows.
\subsubsection{The thrice punctured sphere} \label{sec:crit03}
For $(S,g)$ a hyperbolic thrice punctured sphere, let $\alpha, \beta, \gamma$ be the arcs of the triangulation $\tau$ of Section \ref{sec:arcs03} and let $\delta$ be the arc obtained by flipping $\alpha$ in $\tau$. 

Note that $(\alpha, \beta, \gamma)$ and $(\beta, \gamma, \delta)$ are top-dimensional faces of the arc complex $\overline{X}$. Let us consider local convexity at the edge $\boldsymbol{f}([\beta, \gamma])=\boldsymbol{f}([\alpha, \beta, \gamma])\cap\boldsymbol{f}([\beta, \gamma, \delta])$, corresponding to the flip that replaces $\alpha$ with $\delta$. By the discussion\footnote{Proposition \ref{prop:stripconvex}, which informs this discussion, is also easily verifiable by hand here.} preceding Conjecture \ref{conj:stripconvex}, there exists a relationship of the form
\begin{equation}
B \boldsymbol{f}(\beta)+C \boldsymbol{f}(\gamma) - A \boldsymbol{f}(\alpha) - D \boldsymbol{f}(\delta)=0 \in T_{[g]}\mathcal{T}
\label{eq:vanish} 
\end{equation}
for some $(A,B,C,D)\in \mathbb{R}^4 \smallsetminus \{0\}$, unique up to scalar multiplication, and we can assume $B + C> 0$ and $A,D> 0$. Convexity at $\boldsymbol{f}([\beta, \gamma])$ is the property 
 \begin{equation}
A+D<B+C. \label{eq:tocheck}  
 \end{equation}

Lift all arcs $\alpha, \beta, \gamma, \delta$ to $\mathbb{H}^2$, obtaining a tiling $\mathcal{E}$ of $\mathbb{H}^2$ into infinitely many triangles (or ``tiles''), each with one right angle and two hyperideal vertices. This tiling is equivariant with respect to a holonomy representation 
$$\rho:\pi_1(S)\rightarrow \mathrm{PSL}_2(\mathbb{R})\simeq \mathrm{Isom}^+(\mathbb{H}^2).$$
The relationship \eqref{eq:vanish} expresses the fact that appropriate infinitesimal strip deformations on $\beta, \gamma$ can cancel out appropriate infinitesimal strip deformations on $\alpha, \delta$, yielding the \emph{trivial} deformation of $S$. This can be interpreted (see \cite[\S4]{dgk2}) as an assignment of a Killing field to each tile, via a map
$$\psi:\mathcal{E}\rightarrow \mathfrak{psl}_2(\mathbb{R})\simeq\mathrm{Kill}(\mathbb{H}^2)$$
satisfying the following properties:
\begin{enumerate}[(i)]
 \item Equivariance: for any tile $t\in\mathcal{E}$ and any $\eta\in \pi_1(S)$, we have $\psi(\eta\cdot t)= \mathrm{Ad}(\rho(\eta))(\psi(t))$; in other words $\psi$ defines a tilewise Killing field on the quotient $S$ of $\mathbb{H}^2$;
 \item Vertex consistency: if $t_1$, $t_2$, $t_3$, $t_4$ are the tiles adjacent to a lift of the vertex $\alpha \cap \delta$, numbered clockwise, then $\psi(t_1)-\psi(t_2)+\psi(t_3)-\psi(t_4)=0$; in other words, the $\psi(t_i)$ form a parallelogram in $\mathfrak{psl}_2(\mathbb{R})$;
 \item Edge increments: suppose the geodesic line  $\lambda$ of $\mathbb{H}^2$ is a lift of the arc $\beta$ (resp.\ $\gamma, \alpha, \delta$), and $p\in \lambda$ is the lift of the corresponding waist. If $\lambda$ separates two adjacent tiles $e,e'\in\mathcal{E}$, then $\psi(e')-\psi(e)$ is a Killing field representing an infinitesimal translation whose axis is the perpendicular to $\lambda$ through the lifted waist $p$, and whose signed velocity (measured towards $e'$) is the real number $B$ (resp.\ $C, -A, -D$).  
\end{enumerate}
The increment condition~(iii) expresses the fact that the relative motion of adjacent tiles is given by some strip deformation. The vertex condition~(ii) can be rephrased thus: the point $\alpha\cap \delta$ cuts $\alpha$ in two halves, but the increment of $\psi$ across either half is the same. Condition~(i) expresses the fact that the linear combination of all 4 (signed) strip deformations is trivial in $T_{[g]}\mathcal{T}$.

We can turn this Killing-field interpretation around:
\begin{criterion} \label{crit:killing}
Conversely, if we exhibit an assignment $\psi$ of Killing fields to tiles, satisfying (i)--(ii)--(iii) for some reals $A,B,C,D$ with $A,D>0$, then local convexity of  $\boldsymbol{f}(\overline{X})$ at the edge  $\boldsymbol{f}([\beta, \gamma])$ (where $\boldsymbol{f}$ is defined for the waists induced by the translation axes of the increments of~$\psi$) amounts to the inequality \eqref{eq:tocheck} above: $A+D<B+C$. 
\end{criterion}
In the rest of the paper, we will therefore check convexity of $\boldsymbol{f}$ by exhibiting special Killing fields and computing their velocities $A,B,C,D$.

\subsubsection{The once punctured torus}
The discussion of Section \ref{sec:crit03} is essentially unchanged when $S$ is a hyperbolic once-punctured torus and $\alpha, \beta, \gamma$ the arcs of a triangulation. The only difference is that the tiles are no longer right-angled in general, because $\alpha$ need not intersect its flip $\delta$ perpendicularly (unless $\beta, \gamma$ have equal lengths). This inconvenience is compensated by the fact that $\alpha, \delta$ intersect at their midpoints, which becomes a natural choice of waist.

\section{Proof of Theorem \ref{thm:main} for the thrice punctured sphere} \label{sec:S03}

In this section $S$ is the thrice punctured sphere.

\subsection{A bad choice of waists: midpoints}
We begin by remarking that, for some hyperbolic metrics $g$ on $S$, picking waists at the midpoints of the arcs would \emph{not} define a strip map $\boldsymbol{f}:\overline{X}\rightarrow T_{[g]}\mathcal{T}$ with convex image. Indeed, suppose $(S,g)$ has boundary components $a,b,c$ of lengths $0<\ell(a)\ll 1 =\ell(b)=\ell(c)$. Let $\alpha, \beta, \gamma, \delta$ denote the the arcs $bc,ca,ab,aa$ respectively, where an arc is referred to by the two boundary components it connects. Then $\ell(\beta)=\ell(\gamma)\gg 1 $ and $\ell(\alpha)$ is on the order of $1$: see Figure \ref{fig:wrong}.
\begin{figure}[ht!]
\labellist
\small\hair 2pt
\pinlabel {$\alpha$} at 201 39
\pinlabel {$\beta$} at 115 31
\pinlabel {$\gamma$} at 115 64
\pinlabel {$\delta$} at 130 54
\pinlabel {$p_\alpha$} at 202 48
\pinlabel {$p_\beta$} at 88 30
\pinlabel {$p_\gamma$} at 88 64
\pinlabel {$a$} at -2 50
\pinlabel {$b$} at 180 80
\pinlabel {$c$} at 180 14
\endlabellist
\centering
\includegraphics[width=10cm]{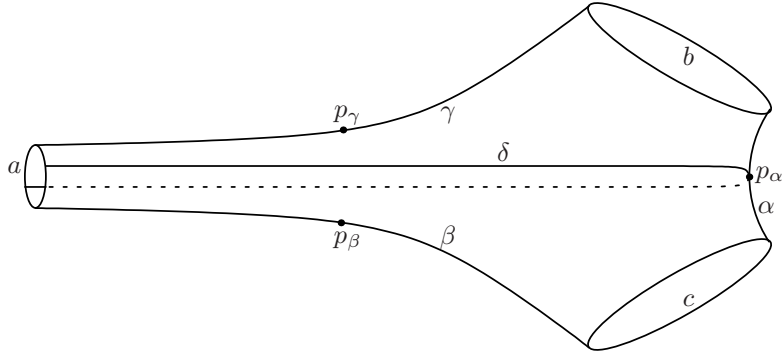}
\caption{A thrice punctured sphere with a short loop.}
\label{fig:wrong} 
\end{figure}

We know that there exist reals $A,B,C,D$ with $A,D>0$ satisfying \eqref{eq:vanish}. By symmetry, we can assume $B=C=1$. Let us prove that $A+D>2=B+C$, in violation of local convexity \eqref{eq:tocheck}.

The Teichm\"uller space $\mathcal{T}$ is coordinatized by the three boundary lengths $\ell(a),\ell(b),\ell(c)$, hence the range $T_{[g]}\mathcal{T}$ of $\boldsymbol{f}$ admits a dual basis $(\mathrm{d}\ell(a), \mathrm{d}\ell(b), \linebreak \mathrm{d}\ell(c))$. By \eqref{cosineformula}, the lengths of $b$ and $c$ are not affected by the infinitesimal deformation $\boldsymbol{f}(\delta)$, because $b\cap \delta = c\cap \delta=\emptyset$. They are affected at roughly unit rate by $\boldsymbol{f}(\alpha)$ because the arc $\alpha$ has length on the order of $1$ and intersects $b,c$. But they are affected at a \emph{huge} rate by $\boldsymbol{f}(\beta)$ and $\boldsymbol{f}(\gamma)$ because the waists on $\beta$ and $\gamma$ are far away from $b$ and $c$. So the identity $\mathrm{d}\ell(b)(\boldsymbol{f}(\beta)+\boldsymbol{f}(\gamma))=\mathrm{d}\ell(b)(A \boldsymbol{f}(\alpha)+D\boldsymbol{f}(\delta))$, true by \eqref{eq:vanish}, can only hold if $A$ is itself huge. Thus $A+D>2$, proving that $\boldsymbol{f}$ has nonconvex image.

\subsection{A good choice of waists}

In a general hyperbolic thrice-punctured sphere $S$, the arcs $\alpha, \delta$ intersect orthogonally (at the midpoint of $\delta$ but not of $\alpha$): we pick this point for the waists $p_\alpha$ and $p_{\delta}$, and do the same for the pair formed by $\beta$ (resp.\ $\gamma$) and its flip. Let us prove that under this choice, $\boldsymbol{f}$ has convex image.

The following is a hyperbolic generalization of a classical Euclidean fact.
\begin{lemma} \label{lem:bisector}
Let $\alpha_0, \alpha_1, \alpha_2$ be lines in $\mathbb{H}^2$ bounding half-planes with disjoint closures in $\mathbb{H}^2\cup\partial_\infty\mathbb{H}^2$ (i.e.\ the sides of a hyperideal triangle). Let $\beta_i$ be the common perpendicular of $\alpha_{i+1}$ and $\alpha_{i-1}$ (indices modulo $3$). The \emph{height} $h_i$ is the common perpendicular to $\beta_i$ and $\alpha_i$, intersecting $\alpha_i$ at the \emph{foot} $p_i$. Then the three heights $h_i$ are the inner angle bisectors of the triangle $p_0 p_1 p_2$.
\end{lemma}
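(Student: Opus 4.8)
\medskip
\noindent\emph{Proof strategy.} This is the hyperbolic avatar of the classical Euclidean fact that the three altitudes of a triangle are the internal angle bisectors of its orthic (pedal) triangle: the vertex of the hyperideal triangle opposite $\alpha_i$ is recorded by the common perpendicular $\beta_i$ of the two other sides, and $h_i$ is the perpendicular dropped from that vertex onto $\alpha_i$. I would prove it by transcribing everything into the Minkowski model $\mathfrak{g}\simeq\mathbb{R}^{2,1}$ of \S\ref{sec:background}, in which taking the common perpendicular of two ultraparallel lines is just a cross product. Write $\alpha_i=n_i^{\perp}\cap\mathbb{H}^2$ with $n_i$ the unit spacelike normal cooriented so that the three half-planes $\{u:\langle u\mid n_i\rangle\ge 0\}$ are the ones with disjoint closures; by Fact~\ref{fact:classical}(2) this amounts to $\langle n_i\mid n_j\rangle<-1$ for $i\ne j$. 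Let $\boxtimes$ denote the Lorentzian cross product, characterized by $\langle u\boxtimes v\mid w\rangle=\det(u,v,w)$ (so $(u\boxtimes v)\boxtimes w=\langle v\mid w\rangle\,u-\langle u\mid w\rangle\,v$ and $\langle u\boxtimes v\mid u\boxtimes v\rangle=\langle u\mid v\rangle^{2}-\langle u\mid u\rangle\langle v\mid v\rangle$). Then $\beta_i$ has normal $\propto n_{i+1}\boxtimes n_{i-1}$, hence $h_i$ has normal $k_i:=\langle n_{i-1}\mid n_i\rangle\,n_{i+1}-\langle n_{i+1}\mid n_i\rangle\,n_{i-1}$; since $\langle k_i\mid n_i\rangle=0$, the foot $p_i=h_i\cap\alpha_i$ is $\propto n_i\boxtimes k_i$, suitably normalized into $\mathbb{H}^2$. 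Introducing the basis $v_j:=n_{j+1}\boxtimes n_{j-1}$, dual to $(n_i)$ up to the nonzero scalar $\Delta:=\det(n_0,n_1,n_2)$ (i.e.\ $\langle v_j\mid n_i\rangle=\Delta\,\delta_{ij}$), a one-line computation gives the tidy formula $p_i\propto\sum_{j\ne i}\langle n_i\mid n_j\rangle\,v_j$.

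The crux is then this: the reflection $\sigma_i$ of $\mathbb{H}^2$ across $h_i$ (acting on $\mathbb{R}^{2,1}$ as the reflection in the spacelike hyperplane $k_i^{\perp}$) fixes $p_i$, because $p_i\in h_i$, and it interchanges the geodesic through $p_i,p_{i+1}$ with the geodesic through $p_i,p_{i-1}$. Together with $\sigma_i(p_i)=p_i$ this says exactly that $h_i$ is one of the two angle bisectors of the triangle $p_0p_1p_2$ at $p_i$. Concretely, writing $\sigma_0(p_1)=p_1-2\frac{\langle p_1\mid k_0\rangle}{\langle k_0\mid k_0\rangle}\,k_0$, the interchange is the vanishing of the $3\times 3$ determinant $\det\bigl(\sigma_0(p_1),p_0,p_2\bigr)$; computing it in the basis $(v_0,v_1,v_2)$ from the coordinates above, together with the explicit values of $\langle p_1\mid k_0\rangle$, $\langle k_0\mid k_0\rangle$ and $\det(k_0,p_0,p_2)$ (all polynomials in the numbers $\langle n_i\mid n_j\rangle$), it collapses to $0$ after a single cancellation; the symmetric statement for $\sigma_0(p_2)$, and the cases $i=1,2$, follow from the same computation.

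It remains to check that $h_i$ is the \emph{internal} bisector --- this is the only genuinely delicate point. I would argue by continuity: the two bisectors at $p_i$ are orthogonal, hence always distinct; $p_0,p_1,p_2$ are never collinear (the vectors $n_i\boxtimes k_i$ are linearly independent, since their determinant in the $v_j$-basis equals $2\,\langle n_0\mid n_1\rangle\langle n_1\mid n_2\rangle\langle n_2\mid n_0\rangle\ne 0$); everything varies continuously with the hyperideal triangle; and the moduli space of such triangles is connected, being parametrized by $\bigl(\langle n_0\mid n_1\rangle,\langle n_1\mid n_2\rangle,\langle n_2\mid n_0\rangle\bigr)\in(-\infty,-1)^{3}$. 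So ``which of the two bisectors $h_i$ is'' is locally constant, and one checks it just once: on the $3$-fold symmetric hyperideal triangle, $h_0$ is an axis of symmetry, so it passes through the center, an interior point of $p_0p_1p_2$, hence $h_0$ enters the interior of $p_0p_1p_2$ near $p_0$; the external bisector does not, so $h_0$ is the internal bisector. (Equivalently, with future representatives chosen coherently, $\langle p_1\mid k_0\rangle\,\langle p_2\mid k_0\rangle=-\langle n_1\mid n_2\rangle^{2}\langle n_2\mid n_0\rangle\langle n_0\mid n_1\rangle\,\Delta^{2}<0$ since $\langle n_2\mid n_0\rangle,\langle n_0\mid n_1\rangle<-1$, so $p_1$ and $p_2$ lie on opposite sides of $h_0$.)
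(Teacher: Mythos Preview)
Your argument is correct and genuinely different from the paper's. The paper proceeds variationally: it first considers the inscribed triangle $p'_0p'_1p'_2$ of minimal perimeter (which exists by a soft compactness argument), observes via Snell's law that each $\alpha_i$ is the external bisector at $p'_i$, and only then identifies $p'_i$ with the foot $p_i$ by a short Minkowski calculation using the dual basis to the unit normals of the sides of $p'_0p'_1p'_2$. You instead attack the bisector property head-on: you write the normal $k_i$ to $h_i$ as an explicit Lorentzian cross product, compute $p_i$ in the basis $v_j=n_{j+1}\boxtimes n_{j-1}$, and verify that the reflection $\sigma_i$ in $h_i$ swaps the two incident sides by checking that a single $3\times 3$ determinant vanishes (I checked: with $a=\langle n_0|n_1\rangle$, $b=\langle n_1|n_2\rangle$, $c=\langle n_2|n_0\rangle$ one gets $\det_{(v)}(p_1,p_0,p_2)=-2abc$, $\langle p_1|k_0\rangle=-ab\Delta$, $\det_{(v)}(k_0,p_0,p_2)=c(a^2+c^2-2abc)/\Delta$, and the two terms cancel). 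What the paper's route buys is a conceptual bonus: the concurrency of the three heights at the incenter drops out automatically, since it exhibits a single vector $w_0+w_1+w_2$ orthogonal to all three $k_i$. What your route buys is self-containment --- no compactness lemma, no appeal to Snell --- and a computation that would mechanize easily. One small caution on your parenthetical sign check: the product $\langle p_1|k_0\rangle\langle p_2|k_0\rangle=-ab^2c\,\Delta^2$ is indeed always negative, but to read this as ``$p_1,p_2$ on opposite sides of $h_0$'' you must know that your chosen representatives $p_1,p_2$ are both future (or both past); this is true, but needs a line of justification (or, as you note, falls out of the continuity argument you already gave).
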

\begin{proof}
By a compactness argument, there exist points $p'_i\in\alpha_i$ such that the triangle $p'_0 p'_1 p'_2$ has minimum possible perimeter. By Snell's law, $\alpha_i$ is the outer angle bisector at the vertex $p'_i$: so it is enough to prove that $p'_i=p_i$.

In Minkowski space $(\mathbb{R}^{2,1},\langle \cdot | \cdot \rangle)$, embed $\mathbb{H}^2$ as the upper unit hyperboloid. Let $v_i\in\mathbb{R}^{2,1}$ be the unit spacelike vector ($\langle v_i | v_i \rangle =1$) such that $\langle p'_{i+1}|v_i \rangle=0= \langle p'_{i-1}|v_i \rangle$ and $\langle p'_i|v_i \rangle>0$. By symmetry,  $\alpha_i=\mathrm{ker} \langle \, \cdot\,  | v_{i+1}+v_{i-1}\rangle \cap \mathbb{H}^2$, and $\mathrm{ker} \langle \, \cdot \, | v_{i+1}-v_{i-1}\rangle \cap \mathbb{H}^2$ is the line $h'_i$ perpendicular to $\alpha_i$ at $p'_i$.

Let $(w_0, w_1, w_2)$ be the dual basis to $(v_0, v_1, v_2)$, i.e. $\langle w_i|v_j\rangle = \delta_{ij}$. Then $w_{i+1}+w_{i-1}-w_i$ pairs to $0$ against $v_i+v_{i+1}$ and $v_i+v_{i-1}$ and $v_{i+1}-v_{i-1}$. This means that $\alpha_{i-1}, \alpha_{i+1}$ and $h'_i$ have a common perpendicular (necessarily $\beta_i$). Therefore $h'_i=h_i$, hence $p'_i=p_i$ as desired. (We may also note that all three heights $h_i=h'_i$ run through the point of $\mathbb{H}^2$ collinear with $w_0+w_1+w_2$, since that vector pairs to $0$ against $v_{i+1}-v_{i-1}$.)
\end{proof}

We now return to the thrice punctured sphere $S$. Let $\alpha, \beta, \gamma$ be the arcs connecting distinct boundary components; the waists $p_\alpha, p_\beta, p_\gamma$ are the feet of the heights of the hyperideal triangle with sides $\alpha, \beta, \gamma$. The point $p_\alpha=p_{\delta}$ is also the midpoint of the flipped edge $\delta$. Denote by $2\widehat{a}, 2\widehat{b}, 2\widehat{c}$ the interior angles of the triangle $p_\alpha p_\beta p_\gamma$ (see Figure \ref{fig:bisector}).
\begin{figure}[ht!]
\labellist
\small\hair 2pt
\pinlabel {${}_{\widehat{a}}$} at 73 49
\pinlabel {${}_{\widehat{a}}$} at 93 49
\pinlabel {${}_{\widehat{a}}$} at 74 42
\pinlabel {${}_{\widehat{a}}$} at 92 43
\pinlabel {${}_{\widehat{b}}$} at 46 65
\pinlabel {${}_{\widehat{b}}$} at 55 71
\pinlabel {${}_{\widehat{b}}$} at 110 70
\pinlabel {${}_{\widehat{b}}$} at 119 65
\pinlabel {${}_{\widehat{c}}$} at 48 29
\pinlabel {${}_{\widehat{c}}$} at 56 22
\pinlabel {${}_{\widehat{c}}$} at 108 22
\pinlabel {${}_{\widehat{c}}$} at 118 29
\pinlabel {$\alpha$} at 85 75
\pinlabel {$\beta$} at 23 62
\pinlabel {$\beta$} at 143 62
\pinlabel {$\gamma$} at 23 33
\pinlabel {$\gamma$} at 143 33
\pinlabel {$\delta$} at 33 49
\pinlabel {$p_\alpha$} at 86 40.5
\pinlabel {$p_\beta$} at 43 77
\pinlabel {$p_\beta$} at 122 77
\pinlabel {$p_\gamma$} at 45 18
\pinlabel {$p_\gamma$} at 120 18
\pinlabel {$\ell_1$} at 63 66
\pinlabel {$\ell_2$} at 103 67
\pinlabel {$\ell_3$} at 101 26
\pinlabel {$\ell_4$} at 63 25
\pinlabel {$t_1$} at 63 80
\pinlabel {$t_2$} at 103 80
\pinlabel {$t_3$} at 101 13
\pinlabel {$t_4$} at 63 13
\endlabellist
\centering
\includegraphics[width=11cm]{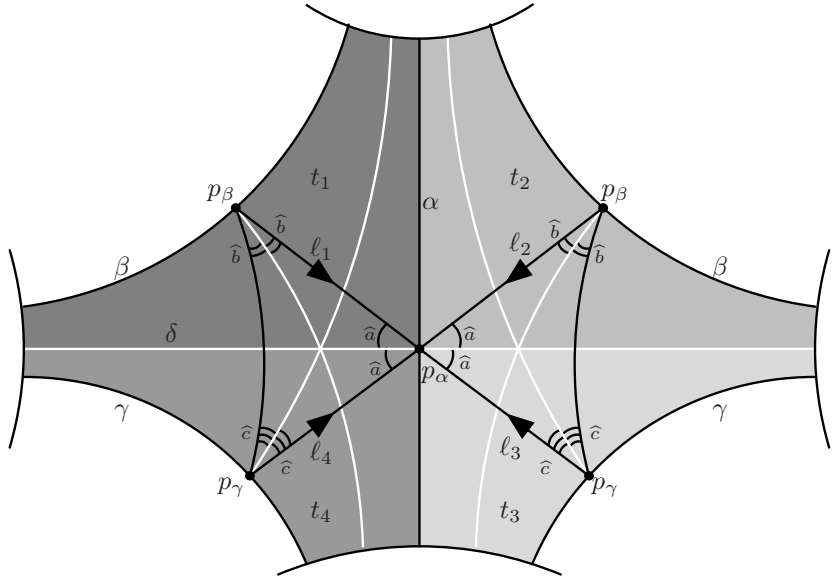}
\caption{Four colored tiles $t_1,\dots, t_4$ of a 3-punctured sphere $S$, in the universal cover. The white lines are heights. The axes $\ell_i$ of all four Killing fields $\psi(t_i)$ run through~$p_\alpha$.}
\label{fig:bisector} 
\end{figure}

The arcs $\beta, \gamma, \alpha, \delta$ subdivide $S$ into four (quotient) tiles $t_1, t_2, t_3, t_4$. Each tile $t_i$ is a right-angled pentagon containing $p_{\alpha}$ as a vertex, and either $p_\beta$ or $p_\gamma$ as an interior point of the opposite edge. Let $\ell_i \subset t_i$ be the segment connecting these two points, oriented towards $p_\alpha$. Assign to each tile $t_i$ the Killing field $\psi(t_i)$ defining a unit-velocity infinitesimal translation along $\ell_i$. Note that $\psi$ respects the symmetry of $S$ defined by reflection in the edges $\alpha, \beta, \gamma$. We claim that $\psi$ (or strictly speaking, its lift to $\mathbb{H}^2$) satisfies the convexity criterion \ref{crit:killing}:
\begin{itemize}
\item Equivariance is true by construction of the lift;
\item Vertex consistency follows from Lemma \ref{lem:bisector}: the points $\psi(t_1)$,\dots, $\psi(t_4)$ form a rectangle in $\mathfrak{psl}_2(\mathbb{R})$, hence in particular a parallelogram;
\item The local increment $\psi(t)-\psi(t')$ across any edge separating tiles $t,t'$ is an (infinitesimal) loxodromy of axis perpendicular to $t\cap t'$, in the correct direction, passing through the correct waist. Indeed:

\smallskip 
\noindent
--- The increment across (either half of) $\alpha$ is, by symmetry, a translation of velocity $A:=2\cos \widehat{a}$, along an axis perpendicular to $\alpha$ at~$p_\alpha$, pushing the adjacent tiles towards each other.
 
\noindent
--- The increment across (either half of) $\delta$ is a translation of velocity $D:=2\sin \widehat{a}$ along an axis perpendicular to $\delta$ at $p_{\delta}=p_\alpha$, pushing the adjacent tiles towards each other.
 
\noindent
--- Using symmetry across $\beta$, the increment at the edge $\beta$ is a translation of velocity $B:=2\cos \widehat{b}$ along an axis perpendicular to $\beta$ at~$p_{\beta}$, pushing the adjacent tiles away from each other.

\noindent
--- Similarly, the increment across $\gamma$ is a translation of velocity $C:=2\cos \widehat{c}$ along an axis perpendicular to $\gamma$ at $p_{\gamma}$, pushing the adjacent tiles away from each other.

\item The convexity inequality \eqref{eq:tocheck} to be checked thus becomes $2\cos \widehat{a} + 2\sin \widehat{a} < 2\cos \widehat{b} + 2\cos \widehat{c}$. This holds true: indeed 
$$\cos \widehat{b} + \cos \widehat{c} > 1 + \cos (\widehat{b}+\widehat{c}) > 1 + \cos (\pi/2 - \widehat{a}) > \cos \widehat{a} + \sin \widehat{a}$$
where the first bound is due to concavity of $\cos$, and the second to $\frac{\pi}{2} > \widehat{a}+\widehat{b}+\widehat{c}$ (since $2\widehat{a}, 2\widehat{b}, 2\widehat{c}$ are the angles of a hyperbolic triangle).
\end{itemize}

This proves Theorem \ref{thm:main} for the thrice punctured sphere.

\section{Proof of Theorem \ref{thm:main} for the once punctured torus} \label{sec:S11}

In the remainder of the paper, $S$ is a once punctured torus. Let $\alpha, \beta, \gamma$ be the edges of a hyperideal triangulation of $S$, and $\delta$ the edge obtained by flipping $\alpha$.

The waist $p_\alpha$ of $\alpha$, still defined as the point $\alpha\cap\delta$, is necessarily fixed under the hyperelliptic involution: $p_\alpha$ is now the midpoint of $\alpha$ \emph{and} of~$\delta$.

\subsection{Loxodromic commutator} \label{sec:loxo}
Let $a,b,c,d$ denote the half-lengths of $\alpha, \beta, \gamma, \delta$. 
Let $S'$ denote the surface $S$ extended by a funnel glued along~$\partial S$.
Place a lift $p$ of $p_\alpha=p_\delta$ at the center of the projective model of $\mathbb{H}^2$ in $\mathbb{P}(\mathbb{R}^{2,1})$. 
Lifts of the edges $\beta, \gamma$ then define a fundamental domain of $S'$, equal to the intersection of $\mathbb{H}^2$ with a parallelogram $\Pi$ (Figure \ref{fig:paral}).
\begin{figure}[ht!]
\labellist
\small\hair 2pt
\pinlabel {$a$} at 32 44
\pinlabel {$a$} at 75 71
\pinlabel {$b$} at 39 18
\pinlabel {$b$} at 81 18
\pinlabel {$b$} at 33 94
\pinlabel {$b$} at 71 94
\pinlabel {$c$} at 9 74
\pinlabel {$c$} at 8 58
\pinlabel {$c$} at 104 55
\pinlabel {$c$} at 103 39
\pinlabel {$d$} at 36 75
\pinlabel {$d$} at 80 44
\pinlabel {$p$} at 55 50.5
\pinlabel {$\Pi$} at 18 18
\pinlabel {$[\mathcal{A}]$} at -1 16
\pinlabel {$[\mathcal{A}']$} at 110 96
\pinlabel {$[\mathcal{D}]$} at 109 16
\pinlabel {$[\mathcal{D}']$} at 5 95
\pinlabel {$p\mathcal{A}\mathcal{D}$} at 55 38
\pinlabel {$p\mathcal{D}\mathcal{A}'$} at 85 56
\pinlabel {$p\mathcal{A}'\mathcal{D}'$} at 56 76
\pinlabel {$p\mathcal{D}'\mathcal{A}$} at 24 57
\pinlabel {$\overline{p\mathcal{A}\mathcal{D}}$} at 59 12
\pinlabel {$\overline{p\mathcal{D}\mathcal{A}'}$} at 117 50
\pinlabel {$\overline{p\mathcal{A}'\mathcal{D}'}$} at 54 98
\pinlabel {$\overline{p\mathcal{D}'\mathcal{A}}$} at -5 58
\pinlabel {${}_{\mathcal{A}\,\mathrm{sh}\, d - \mathcal{D}\,\mathrm{sh}\, a}$} at 209 33
\pinlabel {${}_{\mathcal{D}\,\mathrm{sh}\, d - \mathcal{A}\,\mathrm{sh}\, a}$} at 207 15
\pinlabel {${}_{\mathcal{D}\,\mathrm{sh}\, a - \mathcal{A}'\mathrm{sh}\, d}$} at 237 55
\pinlabel {${}_{\mathcal{A}'\mathrm{sh}\, a - \mathcal{D}\,\mathrm{sh}\, d}$} at 279 52
\pinlabel {${}_{\mathcal{A}'\mathrm{sh}\, d - \mathcal{D}'\mathrm{sh}\, a}$} at 210 81
\pinlabel {${}_{\mathcal{D}'\mathrm{sh}\, d - \mathcal{A}'\mathrm{sh}\, a}$} at 210 97
\pinlabel {${}_{\mathcal{D}'\mathrm{sh}\, a - \mathcal{A}\,\mathrm{sh}\, d}$} at 181 57
\pinlabel {${}_{\mathcal{A}\,\mathrm{sh}\, a - \mathcal{D}'\mathrm{sh}\, d}$} at 139 62
\pinlabel {$[\mathcal{A}]$} at 153 16
\pinlabel {$[\mathcal{A}']$} at 262 96
\pinlabel {$[\mathcal{D}]$} at 261 16
\pinlabel {$[\mathcal{D}']$} at 159 95
\endlabellist
\centering
\includegraphics[width=12.5cm]{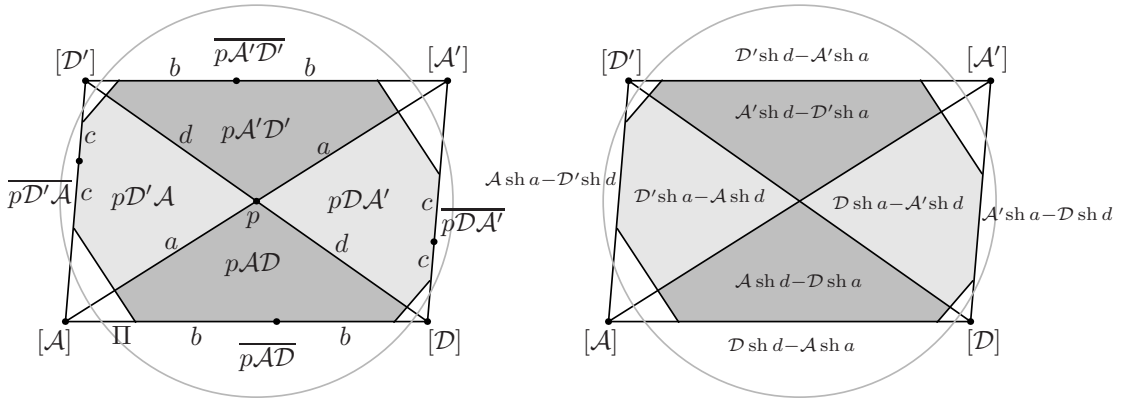}
\caption{Left: lengths in a fundamental domain (right-angled 8-gon) of the punctured torus $S$ made of 4 tiles in~$\mathbb{H}^2$. Dark dots in $\mathbb{H}^2$ are waists. Right: Killing field assignments in the 8 tiles (abbreviating $\sinh$ to $\mathrm{sh}$ and $\cosh$ to $\mathrm{ch}$).}
\label{fig:paral} 
\end{figure}

The boundary of $S$ lifts to lines truncating the corners of $\Pi$. These lines are dual to \emph{unit spacelike} vectors $\mathcal{A},\mathcal{D},\mathcal{A}',\mathcal{D}'$ projecting to the vertices of~$\Pi$, such that $\alpha\subset \mathrm{span} (\mathcal{A},\mathcal{A}')$ and $\delta\subset \mathrm{span} (\mathcal{D},\mathcal{D}')$. 
We may assume that the counterclockwise order of vertics of $\Pi$ goes: $[\mathcal{A}]$, $[\mathcal{D}]$, $[\mathcal{A}']$, $[\mathcal{D}']$.
In $\mathbb{R}^{2,1}$, the third ($p$-parallel) coordinates of $\mathcal{A}$, $\mathcal{D}$, $\mathcal{A}'$, $\mathcal{D}'$ are respectively $\sinh a$, $\sinh d$, $\sinh a$, $\sinh d$; thus
\begin{equation} \label{eq:centered}
(\mathcal{A}+\mathcal{A}')\sinh d = (\mathcal{D}+\mathcal{D}')\sinh a. 
\end{equation}
The lifts of the edges $\alpha, \delta$ subdivide $\Pi\cap \mathbb{H}^2$ into four tiles 
$p\mathcal{A}\mathcal{D}$, 
$p\mathcal{D}\mathcal{A}'$, 
$p\mathcal{A}'\mathcal{D}'$, 
$p\mathcal{D}'\mathcal{A}$ (see Figure \ref{fig:paral}), adjacent respectively to tiles 
$\overline{p\mathcal{A}\mathcal{D}}$, 
$\overline{p\mathcal{D}\mathcal{A}'}$, 
$\overline{p\mathcal{A}'\mathcal{D}'}$, 
$\overline{p\mathcal{D}'\mathcal{A}}$
outside $\Pi$. We pick the following assignment of Killing fields:
\begin{eqnarray*}
\psi(p\mathcal{A}\mathcal{D}) & := & \mathcal{A}\sinh d - \mathcal{D} \sinh a \\
\psi(p\mathcal{D}\mathcal{A}') & := & \mathcal{D}\sinh a - \mathcal{A}' \sinh d \\
\psi(p\mathcal{A}'\mathcal{D}') & := & \mathcal{A}'\sinh d - \mathcal{D}' \sinh a \\
\psi(p\mathcal{D}'\mathcal{A}) & := & \mathcal{D}'\sinh a - \mathcal{A} \sinh d.
\end{eqnarray*}
Note that these are infinitesimal translations whose axes run perpendicular\footnote{Moreover, all four infinitesimal translation axes run through $p$, because all four vectors have vanishing third coordinate; but we will not use this fact.} to the sides of $\Pi$, into $\Pi$, because the vectors on the right-hand side belong to the correct 2-plane quadrants by Fact \ref{fact:classical}.(3). We extend $\psi$ by symmetry under the $\pi$-rotations around the waists (midpoints) of $\beta, \gamma$. (This will in particular force each edge increment, such as $\psi(p\mathcal{A}\mathcal{D})-\psi(\overline{p\mathcal{A}\mathcal{D}})$, to have its axis run through the corresponding edge midpoint, i.e.\ the correct waist.) Note that the $\pi$-rotation around the hyperbolic midpoint of $[\mathcal{A}\mathcal{D}]$, for example, swaps the unit spacelike vectors $\mathcal{A}$ and $\mathcal{D}$, because it swaps the corresponding boundary components of the lift of $S$. This entails
\begin{eqnarray*}
\psi(\overline{p\mathcal{A}\mathcal{D}}) & := & \mathcal{D}\sinh d - \mathcal{A} \sinh a \\
\psi(\overline{p\mathcal{D}\mathcal{A}'}) & := & \mathcal{A}'\sinh a - \mathcal{D} \sinh d \\
\psi(\overline{p\mathcal{A}'\mathcal{D}'}) & := & \mathcal{D}'\sinh d - \mathcal{A}' \sinh a \\
\psi(\overline{p\mathcal{D}'\mathcal{A}}) & := & \mathcal{A}\sinh a - \mathcal{D}' \sinh d.
\end{eqnarray*}

We may now check the convexity criterion \ref{crit:killing} for $\psi$. Equivariance is true by construction.

Consistency at the vertex $\alpha\cap\delta$ is the relationship 
$\psi(p\mathcal{A}\mathcal{D})+
\psi(p\mathcal{A}'\mathcal{D}')=
\psi(p\mathcal{D}\mathcal{A}')+
\psi(p\mathcal{D}'\mathcal{A})$, which follows from \eqref{eq:centered} (actually both sides vanish).

The increment at the edge $\beta$, or $\mathcal{A}\mathcal{D}$, is $\psi(p\mathcal{A}\mathcal{D})-\psi(\overline{p\mathcal{A}\mathcal{D}})= (\mathcal{A}-\mathcal{D})(\sinh a +\sinh d)$, an infinitesimal loxodromy with axis perpendicular to $\mathcal{A}\mathcal{D}$ (at the waist), pulling the tile $p\mathcal{A}\mathcal{D}$ away from $\overline{p\mathcal{A}\mathcal{D}}$, i.e.\ pointing into~$\Pi$. By Fact \ref{fact:classical}, its velocity~is 
$$B:=\Vert \mathcal{A}-\mathcal{D} \Vert (\sinh a + \sinh d) = 2\cosh b \, (\sinh a + \sinh b).$$

The increment at the edge $\gamma$, or $\mathcal{D}\mathcal{A}'$, is $\psi(p\mathcal{D}\mathcal{A}')-\psi(\overline{p\mathcal{D}\mathcal{A}'})= (\mathcal{D}-\mathcal{A}')(\sinh a +\sinh d)$, an infinitesimal loxodromy with axis perpendicular to $\mathcal{D}\mathcal{A}'$, pulling $p\mathcal{D}\mathcal{A}'$ away from $\overline{p\mathcal{D}\mathcal{A}'}$. Its velocity is 
$$C:=\Vert \mathcal{D}-\mathcal{A}' \Vert (\sinh a + \sinh d) = 2\cosh c \, (\sinh a + \sinh b).$$

The increment at the edge $\alpha$, or $p\mathcal{A}$, is $\psi(p\mathcal{A}\mathcal{D})-\psi(p\mathcal{D}'\mathcal{A})=(\mathcal{A}-\mathcal{A}')\sinh d$ (using \eqref{eq:centered}), an infinitesimal loxodromy with axis perpendicular to $\mathcal{A}\mathcal{A}'$, pulling $p\mathcal{A}\mathcal{D}$ towards $p\mathcal{D}\mathcal{A}'$. Its velocity is 
$$A:=\Vert \mathcal{A}-\mathcal{A'} \Vert \sinh d = 2\cosh a \, \sinh d. $$ 

Finally, the increment at the edge $\delta$, or $p\mathcal{D}$, is $\psi(p\mathcal{D}\mathcal{A}')-\psi(p\mathcal{A}\mathcal{D})=(\mathcal{D}-\mathcal{D}')\sinh a$ (using \eqref{eq:centered}), an infinitesimal loxodromy with axis perpendicular to $\mathcal{D}\mathcal{D}'$, pulling $p\mathcal{D}\mathcal{A}'$ towards $p\mathcal{A}\mathcal{D}$. Its velocity is 
$$D:=\Vert \mathcal{D}-\mathcal{D'} \Vert \sinh a = 2\cosh d \, \sinh a.$$ 

It remains to check convexity via \eqref{eq:tocheck}, namely $A+D<B+C$, i.e.\ 
\begin{eqnarray}
\cosh d \, \sinh a + \cosh a \sinh d &<& (\cosh b + \cosh c) (\sinh a + \sinh d) \notag\\
\text{i.e. }\hspace{10pt} \frac{\sinh (a+d)}{\sinh a + \sinh d} &<& \cosh b + \cosh c. \label{haha}
\end{eqnarray}
Let us prove \eqref{haha}. If $\theta$ denotes the angle formed by the diagonals $\alpha$ and $\delta$ of $\Pi$, then a classical trigonometric formula gives (up to permutation)
\begin{eqnarray*}
 \cosh (2b) &=& \sinh a \, \sinh d -\cosh a \, \cosh d\, \cos \theta  \\
 \cosh (2c) &=& \sinh a \, \sinh d +\cosh a \, \cosh d\, \cos \theta.
\end{eqnarray*}
In particular, $\cosh (2b) + \cosh (2c)$ depends only on $a$ and $d$, not on $\theta$.
Since the map $x\mapsto \sqrt{\frac{x+1}{2}}$, taking $\cosh (2u)$ to $\cosh u$, is concave, it follows that the infimal possible value $\mu$ of $\cosh b + \cosh c$ (with $a,d$ fixed) is approached for extremal $\theta$, i.e.\ when $\{\cosh (2b), \cosh (2c) \}=\{1,2\sinh a \, \sinh d -1\}$: thus $\mu=1+\sqrt{\sinh a \, \sinh d}$. The following are equivalent:
\begin{eqnarray*}
  \frac{\sinh (a+d)}{\sinh a + \sinh d} &<& 1+\sqrt{\sinh a \, \sinh d} \\
  \frac{\sinh a\, (2\sinh^2\frac{d}{2}) + \sinh d\, (2\sinh^2\frac{a}{2})}{\sinh a + \sinh d} &<& \sqrt{\sinh a \, \sinh d} \\
  \frac{2\sinh^2\frac{d}{2}}{\sinh d} + \frac{2\sinh^2\frac{a}{2}}{\sinh a} &<& \sqrt{\frac{\sinh a}{\sinh d}} + \sqrt{\frac{\sinh d}{\sinh a}}.  
\end{eqnarray*}
The last inequality is \emph{true}: its left hand side is $\tanh \frac{d}{2}+\tanh \frac{a}{2} <2$, while its right hand side is $\geq 2$. This proves convexity, hence Theorem \ref{thm:main} for $S$ a one-holed torus.

\subsection{Elliptic commutator} \label{sec:comell}
Let $g$ be an incomplete hyperbolic metric on the once-punctured torus $S$ whose completion admits a cone singularity of angle $\theta\in (0,2\pi)$. The holonomy representation of $g$ takes the two generators $u,v$ of $\pi_1(S)$ to two loxodromics with elliptic commutator. In fact, the fixed points of $[u,v]$, $[v,u^{-1}]$, $[u^{-1},v^{-1}]$, $[v^{-1},u]$ in $\mathbb{H}^2$ form the vertices of a convex quadrilateral, equal to a fundamental domain of $(S,g)$ (the generators $u^{\pm 1},v^{\pm 1}$ identify opposite sides in pairs). Any element of the arc complex of $S$ is realized as an embedded geodesic loop $\alpha$ in $S$, connecting the singularity to itself.

We can extend to this context the strip construction along $\alpha$ defined in Section \ref{sec:stripmap}. The main difference is that there are no funnels to extend the metric $g$ into: instead, we should remove from $(S,g)$ a neighborhood of the puncture $p$, then cut along $\alpha$ and insert an appropriate narrow trapezoid of $\mathbb{H}^2$, and finally extend the new metric all the way to a new cone singularity~$p'$. The position of $p'$ is forced by the gluing parameters; see Figure~\ref{fig:newcone}.
\begin{figure}[ht!]
\labellist
\small\hair 2pt
\pinlabel {$\alpha$} at 27 18
\pinlabel {$p$} at 38 66
\pinlabel {$p'$} at 266 74
\endlabellist
\centering
\includegraphics[width=11cm]{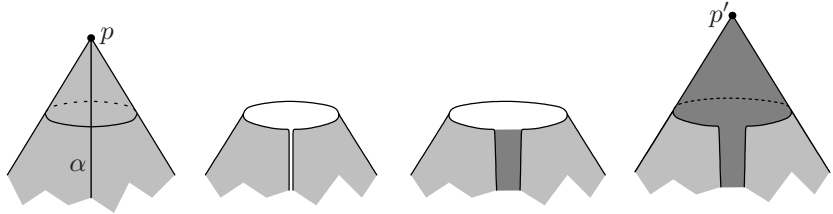}
\caption{Procedure for inserting a strip into a cone metric along an arc $\alpha$. In $S$, since both endpoints of $\alpha$ are at the singularity $p$, we should actually consider a combination of two such procedures.}
\label{fig:newcone} 
\end{figure}

The strip map $\boldsymbol{f}$ is therefore still well-defined, valued in the tangent space at the (smooth) point $[g]$ to the representation variety of $\pi_1(S)$. Thus Conjecture \ref{conj:stripconvex} (convexity of $\boldsymbol{f}$) still makes sense, as does the convexity criterion \ref{crit:killing} (the only difference is that the Killing fields $\psi(\cdot)$ live on the universal cover of the regular part of $S$, which is no longer isometric to $\mathbb{H}^2$: but they still make sense as tilewise Killing fields in the quotient $S$).

\begin{theorem}
Conjecture \ref{conj:stripconvex} continues to hold for $S$ a punctured torus with cone singularity.
\label{thm:ellip}
\end{theorem}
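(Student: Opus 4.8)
The plan is to mimic the smooth (funnel) case of Section~\ref{sec:loxo} almost verbatim, tracking how the presence of a cone point of angle $\theta$ modifies the trigonometry. As before, place a lift $p$ of the common waist $p_\alpha=p_\delta$ at the center of the projective model, so that the four tiles $p\mathcal{A}\mathcal{D},\ p\mathcal{D}\mathcal{A}',\ p\mathcal{A}'\mathcal{D}',\ p\mathcal{D}'\mathcal{A}$ fit around $p$; the only structural change is that the total angle at $p$ is now $2\pi$ on the universal cover of the regular part only ``locally'', i.e.\ the four tiles around $p$ close up with angle sum $\theta$ rather than $2\pi$. Accordingly, the unit spacelike vectors $\mathcal{A},\mathcal{D},\mathcal{A}',\mathcal{D}'$ dual to the truncating lines of $\partial S$ (now the lines cutting off a neighborhood of the cone point) still project to the corners of the fundamental parallelogram-like region, but the identities relating them are twisted by a rotation of angle $\theta$. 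Concretely I would replace \eqref{eq:centered} by the statement that $\mathcal{A}'$ is obtained from $\mathcal{A}$, and $\mathcal{D}'$ from $\mathcal{D}$, by the rotation $R_\theta$ of angle $\theta$ about $p$ (composed with the appropriate loxodromic translations of half-lengths $a,d$), and recompute the third ($p$-parallel) coordinates: they pick up factors of $\cos(\theta/2)$ or similar. The Killing field assignment $\psi$ is then defined by the same formulas, with $\sinh a,\sinh d$ replaced by the appropriate $\theta$-deformed quantities, and extended by the $\pi$-rotations about the midpoints of $\beta,\gamma$ exactly as before.

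Next I would verify the three conditions of Criterion~\ref{crit:killing}. Equivariance holds by construction; vertex consistency at $\alpha\cap\delta$ reduces, just as in the smooth case, to the twisted version of \eqref{eq:centered} and should again make both sides vanish (this is where the $\theta$-dependence must be arranged so the parallelogram condition survives). The edge increments across $\alpha,\delta$ and across $\beta,\gamma$ are computed via Fact~\ref{fact:classical} exactly as in Section~\ref{sec:loxo}: each is an infinitesimal translation whose velocity is $\|\,\cdot\,\|$ times a $\sinh$-type factor, giving four numbers $A,D$ (increments across $\alpha,\delta$) and $B,C$ (across $\beta,\gamma$) that now depend on $a,b,c,d$ \emph{and} $\theta$. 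I expect $B,C$ to again be of the form $2\cosh b\cdot(\text{something})$ and $2\cosh c\cdot(\text{same something})$, while $A,D$ involve $\cosh a,\cosh d$ against the other ``something''; the key point is that the ratio structure that made \eqref{haha} work should persist, with the denominator $\sinh a+\sinh d$ replaced by its $\theta$-analogue.

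The heart of the argument is then the analogue of inequality \eqref{haha}: after dividing out, one must show
\begin{equation*}
\frac{(\text{numerator in }a,d,\theta)}{(\text{denominator in }a,d,\theta)} < \cosh b + \cosh c .
\end{equation*}
Here I would again use the classical cone-torus trigonometric relation expressing $\cosh(2b)+\cosh(2c)$ (or the relevant combination) in terms of $a,d,\theta$ and the diagonal angle, note that this sum is independent of that angle, and invoke concavity of $x\mapsto\sqrt{(x+1)/2}$ to reduce to the extremal case, obtaining an infimum $\mu$ for $\cosh b+\cosh c$ in closed form. It then remains to check a one- or two-variable inequality in $a,d$ (with $\theta$ as a parameter) of the same flavor as the final displayed chain in Section~\ref{sec:loxo}, namely that a sum of two $\tanh$-like terms bounded by $2$ is dominated by a sum of two square-root terms bounded below by $2$. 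I would also remark that as $\theta\to 0$ one recovers Theorem~\ref{thm:parab} and as $\theta\to 2\pi$ (in a suitable degenerate sense) the cusped/funnel computation of Section~\ref{sec:loxo}, providing a sanity check.

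The main obstacle I anticipate is bookkeeping rather than conceptual: getting the $\theta$-twisted version of \eqref{eq:centered} exactly right so that vertex consistency still holds with \emph{both sides vanishing}, and then verifying that the final inequality remains true for \emph{all} $\theta\in(0,2\pi)$ — it is conceivable that for $\theta$ near $2\pi$ the ``something'' factors no longer coincide between $B$ and $C$, or that the concavity reduction yields a weaker $\mu$; if so, one would need an extra estimate controlling how $\mu$ degrades with $\theta$. I would handle this by first doing the computation symbolically with $\theta$ kept general, isolating the $\theta$-dependence into a single monotone factor, and only then specializing.
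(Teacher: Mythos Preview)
Your proposal contains a genuine conceptual error about where the cone singularity sits and how it enters the computation. The center $p$ of the parallelogram is the waist $p_\alpha=p_\delta$, a \emph{regular} point of $(S,g)$: the four tiles around it still close up with total angle $2\pi$, and there is no rotation $R_\theta$ to insert there. The cone point lifts to the \emph{vertices} $[\mathcal{A}],[\mathcal{D}],[\mathcal{A}'],[\mathcal{D}']$ of $\Pi$. Consequently these vertices are now honest points of $\mathbb{H}^2$: the vectors $\mathcal{A},\mathcal{D},\mathcal{A}',\mathcal{D}'$ are unit \emph{timelike}, not unit spacelike dual to ``truncating lines'' (there is no boundary to truncate). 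This single change drives everything: the $p$-parallel coordinates become $\cosh a,\cosh d,\cosh a,\cosh d$ instead of $\sinh a,\sinh d,\sinh a,\sinh d$, so the analogue of \eqref{eq:centered} is simply
\[
(\mathcal{A}+\mathcal{A}')\cosh d = (\mathcal{D}+\mathcal{D}')\cosh a,
\]
with no $\theta$-twist whatsoever. The cone angle never appears explicitly in the formulas; it is encoded in the shape of the compact quadrilateral $\Pi$.

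Because the $\mathcal{A},\mathcal{D}$ are now timelike, the increment norms are governed by Fact~\ref{fact:classical}(1) rather than (2), so $\Vert\mathcal{A}-\mathcal{D}\Vert=2\sinh b$ (not $2\cosh b$), and similarly for the others. The net effect is a clean $\sinh\leftrightarrow\cosh$ swap throughout Section~\ref{sec:loxo}: one finds $A=2\sinh a\cosh d$, $D=2\sinh d\cosh a$, $B=2\sinh b\,(\cosh a+\cosh d)$, $C=2\sinh c\,(\cosh a+\cosh d)$, and the inequality to prove is
\[
\frac{\sinh(a+d)}{\cosh a+\cosh d}<\sinh b+\sinh c,
\]
not an inequality with $\cosh b+\cosh c$ on the right. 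The concavity step accordingly uses $x\mapsto\sqrt{(x-1)/2}$ (taking $\cosh 2u$ to $\sinh u$), and the extremal-angle reduction gives $\sinh b+\sinh c>\sinh\frac{a+d}{2}+\sinh\bigl|\frac{a-d}{2}\bigr|$, which dominates $\sinh\frac{a+d}{2}/\cosh\frac{a-d}{2}$ directly. Your anticipated ``$\theta$-dependent bookkeeping'' and the worry about $\theta$ near $2\pi$ are therefore misdirected: once you use timelike vertex vectors, the argument is no harder than the loxodromic case and is uniform in the cone angle.
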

\begin{proof}
We adapt the method from Section \ref{sec:loxo}.
Let $S$ be a hyperbolic punctured torus with cone singularity.
We still call $\alpha, \beta, \gamma$ the edges (running from the singularity to itself) of a triangulation of $S$, and $\delta$ the flip of $\alpha$. The waist of $\alpha$ is its midpoint, where it intersects~$\delta$.

Let $a,b,c,d$ denote the half-lengths of $\alpha, \beta, \gamma, \delta$. 
Place a lift $p$ of $p_\alpha=p_\delta$ at the center of the projective model of $\mathbb{H}^2$ in $\mathbb{P}(\mathbb{R}^{2,1})$. 
Lifts of the edges $\beta, \gamma$ then define a fundamental domain of $S$, equal to a parallelogram $\Pi \subset \mathbb{H}^2$.

Define \emph{unit timelike} vectors $\mathcal{A},\mathcal{D},\mathcal{A}',\mathcal{D}'$ projecting to the vertices of $\Pi$, such that $\alpha\subset \mathrm{span} (\mathcal{A},\mathcal{A}')$ and $\delta\subset \mathrm{span} (\mathcal{D},\mathcal{D}')$. In $\mathbb{R}^{2,1}$, the third ($p$-parallel) coordinates of $\mathcal{A}$, $\mathcal{D}$, $\mathcal{A}'$, $\mathcal{D}'$ are respectively $\cosh a$, $\cosh d$, $\cosh a$, $\cosh d$; thus
\begin{equation} \label{eq:centered-ellip}
(\mathcal{A}+\mathcal{A}')\cosh d = (\mathcal{D}+\mathcal{D}')\cosh a. 
\end{equation}
The lifts of the edges $\alpha, \delta$ subdivide $\Pi$ into four tiles $p\mathcal{A}\mathcal{D}$, $p\mathcal{D}\mathcal{A}'$, $p\mathcal{A}'\mathcal{D}'$, $p\mathcal{D}'\mathcal{A}$, adjacent respectively to $\overline{p\mathcal{A}\mathcal{D}}$, $\overline{p\mathcal{D}\mathcal{A}'}$, $\overline{p\mathcal{A}'\mathcal{D}'}$,  $\overline{p\mathcal{D}'\mathcal{A}}$ (each sharing an edge with $\Pi$). We pick the following assignment of Killing fields (the picture is identical with Figure~\ref{fig:paral}, except $[\mathcal{A}]$, $[\mathcal{D}]$, $[\mathcal{A}']$, $[\mathcal{D}']$ lie inside the disk $\mathbb{H}^2$, and $\cosh$ and $\sinh$ are exchanged):
\begin{eqnarray*}
\psi(p\mathcal{A}\mathcal{D}) & := & \mathcal{A}\cosh d - \mathcal{D} \cosh a \\
\psi(p\mathcal{D}\mathcal{A}') & := & \mathcal{D}\cosh a - \mathcal{A}' \cosh d \\
\psi(p\mathcal{A}'\mathcal{D}') & := & \mathcal{A}'\cosh d - \mathcal{D}' \cosh a \\
\psi(p\mathcal{D}'\mathcal{A}) & := & \mathcal{D}'\cosh a - \mathcal{A} \cosh d.
\end{eqnarray*}
Note that these are infinitesimal translations whose axes run perpendicular to the sides of $\Pi$, because the vectors on the right-hand side belong to the correct 2-plane quadrants (Fact \ref{fact:classical}.(3)). 
We extend $\psi$ by symmetry under the $\pi$-rotations around the waists (midpoints) of $\beta, \gamma$. Note that the $\pi$-rotation around the hyperbolic midpoint of $[\mathcal{A}\mathcal{D}]$, for example, swaps the unit timelike vectors $\mathcal{A}$ and~$\mathcal{D}$. This entails
\begin{eqnarray*}
\psi(\overline{p\mathcal{A}\mathcal{D}}) & := & \mathcal{D}\cosh d - \mathcal{A} \cosh a \\
\psi(\overline{p\mathcal{D}\mathcal{A}'}) & := & \mathcal{A}'\cosh a - \mathcal{D} \cosh d \\
\psi(\overline{p\mathcal{A}'\mathcal{D}'}) & := & \mathcal{D}'\cosh d - \mathcal{A}' \cosh a \\
\psi(\overline{p\mathcal{D}'\mathcal{A}}) & := & \mathcal{A}\cosh a - \mathcal{D}' \cosh d.
\end{eqnarray*}

We may now check the convexity criterion from $\psi$. Equivariance (relative to the holonomy representation of the regular part of $S$) is true by construction. Vertex consistency $\psi(p\mathcal{A}\mathcal{D})+ \psi(p\mathcal{A}'\mathcal{D}')=
\psi(p\mathcal{D}\mathcal{A}')+ \psi(p\mathcal{D}'\mathcal{A})$ follows from \eqref{eq:centered-ellip}.

The increment at the edge $\beta$, or $\mathcal{A}\mathcal{D}$, is $\psi(p\mathcal{A}\mathcal{D})-\psi(\overline{p\mathcal{A}\mathcal{D}})= (\mathcal{A}-\mathcal{D})(\cosh a +\cosh d)$, an infinitesimal loxodromy with axis perpendicular to $\mathcal{A}\mathcal{D}$ (at the waist), pulling the tile $p\mathcal{A}\mathcal{D}$ away from $\overline{p\mathcal{A}\mathcal{D}}$, i.e.\ pointing into~$\Pi$. By Fact \ref{fact:classical}, its velocity~is 
$$B:=\Vert \mathcal{A}-\mathcal{D} \Vert (\cosh a + \cosh d) = 2 \sinh b \, (\cosh a + \cosh d).$$

The increment at the edge $\gamma$, or $\mathcal{D}\mathcal{A}'$, is $\psi(p\mathcal{D}\mathcal{A}')-\psi(\overline{p\mathcal{D}\mathcal{A}'})= (\mathcal{D}-\mathcal{A}')(\cosh a +\cosh d)$, an infinitesimal loxodromy with axis perpendicular to $\mathcal{D}\mathcal{A}'$, pulling $p\mathcal{D}\mathcal{A}'$ away from $\overline{p\mathcal{D}\mathcal{A}'}$. Its velocity is 
$$C:=\Vert \mathcal{D}-\mathcal{A}' \Vert (\cosh a + \cosh d) = 2 \sinh c \, (\cosh a + \cosh d).$$

The increment at the edge $\alpha$, or $p\mathcal{A}$, is $\psi(p\mathcal{A}\mathcal{D})-\psi(p\mathcal{D}'\mathcal{A})=(\mathcal{A}-\mathcal{A}')\cosh d$ (using \eqref{eq:centered-ellip}), an infinitesimal loxodromy with axis perpendicular to $\mathcal{A}\mathcal{A}'$, pulling $p\mathcal{A}\mathcal{D}$ towards $p\mathcal{D}\mathcal{A}'$.
Its velocity is 
$$A:=\Vert \mathcal{A}-\mathcal{A'} \Vert \cosh d = 2\sinh a \, \cosh d.$$ 

Finally, the increment at the edge $\delta$, or $p\mathcal{D}$, is $\psi(p\mathcal{D}\mathcal{A}')-\psi(p\mathcal{A}\mathcal{D})=(\mathcal{D}-\mathcal{D}')\cosh a$ (using \eqref{eq:centered-ellip}), an infinitesimal loxodromy with axis perpendicular to $\mathcal{D}\mathcal{D}'$, pulling $p\mathcal{D}\mathcal{A}'$ towards $p\mathcal{A}\mathcal{D}$.
Its velocity is 
$$D:=\Vert \mathcal{D}-\mathcal{D'} \Vert \cosh a = 2\sinh d \, \cosh a.$$ 
It remains to check convexity via \eqref{eq:tocheck}, namely $A+D<B+C$, i.e.\
\begin{eqnarray}
&& \sinh d \, \cosh a + \sinh a \cosh d \:  < \:  (\sinh b + \sinh c) (\cosh a + \cosh d) \notag \\
&& \text{ i.e. } \hspace{5pt} \frac{\sinh (a+d)}{\cosh a + \cosh d} = \frac{\sinh \frac{a+d}{2}}{\cosh \frac{a-d}{2}} \: < \:  \sinh b + \sinh c. \label{hihi}
\end{eqnarray}
Let us prove \eqref{hihi}. If $\theta$ denotes the angle formed by the diagonals $\alpha$ and $\delta$ of $\Pi$, then a classical trigonometric formula gives (up to permutation)
\begin{eqnarray*}
 \cosh (2b) &=& \cosh a \, \cosh d -\sinh a \, \sinh d\, \cos \theta \\
 \cosh (2c) &=& \cosh a \, \cosh d +\sinh a \, \sinh d\, \cos \theta.
\end{eqnarray*}
In particular, $\cosh (2b) + \cosh (2c)$ depends only on $a$ and $d$, not on $\theta$.
Since the map $x\mapsto \sqrt{\frac{x-1}{2}}$, taking $\cosh (2u)$ to $\sinh u$, is concave, it follows that the infimal possible value of $\sinh b + \sinh c$ (with $a,d$ fixed) is approached when $\theta\rightarrow 0$ or $\theta \rightarrow \pi$, hence $\sinh b + \sinh c \rightarrow \sinh \frac{a+d}{2} + \sinh |\frac{a-d}{2}|$. This is clearly $\geq \left . \sinh \frac{a+d}{2} \right / \cosh \frac{a-d}{2}$ (with equality when $a=d$, but bear in mind that the infimal value is \emph{not} achieved: $\theta\notin\{0,\pi\}$). Theorem \ref{thm:ellip} is proved. \end{proof}

\subsection{Parabolic commutator} \label{sec:parab}
\begin{theorem}
Conjecture \ref{conj:stripconvex} continues to hold for $S$ a one-cusped torus.
\label{thm:parab}
\end{theorem}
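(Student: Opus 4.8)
The plan is to run the computation of \S\ref{sec:loxo}--\S\ref{sec:comell} a third time, with the spacelike (resp.\ timelike) decoration vectors replaced by \emph{lightlike} ones; the parabolic case will turn out to be the simplest of the three. Let $\alpha,\beta,\gamma$ be the edges of a triangulation of the one-cusped torus $S$ and $\delta$ the flip of $\alpha$; the waist $p_\alpha=p_\delta$ is the point $\alpha\cap\delta$, fixed by the hyperelliptic involution $R$ (a $\pi$-rotation). Place a lift $p$ of $p_\alpha$ at the centre of the projective model of $\mathbb H^2$; lifts of $\beta,\gamma$ bound a fundamental domain $\Pi$, now an \emph{ideal} quadrilateral with vertices $[\mathcal A],[\mathcal D],[\mathcal A'],[\mathcal D']$ in counterclockwise order, $[\mathcal A']=R[\mathcal A]$ and $[\mathcal D']=R[\mathcal D]$ --- and $[\mathcal A],[\mathcal A']$ as well as $[\mathcal D],[\mathcal D']$ \emph{antipodal} on $\partial_\infty\mathbb H^2$, since $\alpha,\delta$ are diameters through $p$. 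Lift these vertices to future lightlike vectors $\mathcal A,\mathcal D,\mathcal A',\mathcal D'$, normalised by an $R$-invariant choice of horocycle at the cusp (a decoration). Such a decoration makes the third ($p$-parallel) coordinates of $\mathcal A,\mathcal D,\mathcal A',\mathcal D'$ equal to $e^{\widehat a},e^{\widehat d},e^{\widehat a},e^{\widehat d}$ for suitable reals $\widehat a,\widehat d$ playing the role of the half-lengths $a,d$ of \S\ref{sec:loxo}, whence the \emph{centered relation}
\[(\mathcal A+\mathcal A')\,e^{\widehat d}=(\mathcal D+\mathcal D')\,e^{\widehat a}=2\,e^{\widehat a+\widehat d}\,p,\]
the exact analogue of \eqref{eq:centered} and \eqref{eq:centered-ellip}.

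I would then define $\psi$ on the eight tiles by copying the formulas of \S\ref{sec:loxo}--\S\ref{sec:comell} with $\sinh,\cosh$ replaced by $e^{\widehat a},e^{\widehat d}$ (so $\psi(p\mathcal A\mathcal D):=\mathcal A\,e^{\widehat d}-\mathcal D\,e^{\widehat a}$, $\psi(\overline{p\mathcal A\mathcal D}):=\mathcal D\,e^{\widehat d}-\mathcal A\,e^{\widehat a}$, and so on), and check Criterion~\ref{crit:killing}. Equivariance is built in; vertex consistency at $\alpha\cap\delta$ follows from the centered relation (both sides again vanish); the edge increments are infinitesimal loxodromies perpendicular to the appropriate edges, in the correct directions and through the correct waists, by Fact~\ref{fact:classical}(3), which is stated precisely for lightlike vectors and so applies unchanged. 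Computing the four velocities via Fact~\ref{fact:classical} and the elementary identity $\Vert\mathcal X-\mathcal Y\Vert^2=-2\langle\mathcal X|\mathcal Y\rangle$ for future lightlike $\mathcal X,\mathcal Y$, and using that $\mathcal A,\mathcal A'$ and $\mathcal D,\mathcal D'$ are antipodal while $\beta=[\mathcal A][\mathcal D]$ and $\gamma=[\mathcal D][\mathcal A']$ are chords subtending central angles $2\sigma$ and $\pi-2\sigma$, I expect to find
\[A=D=2\,e^{\widehat a+\widehat d},\qquad B=2\,e^{\frac{\widehat a+\widehat d}{2}}\bigl(e^{\widehat a}+e^{\widehat d}\bigr)\sin\sigma,\qquad C=2\,e^{\frac{\widehat a+\widehat d}{2}}\bigl(e^{\widehat a}+e^{\widehat d}\bigr)\cos\sigma,\]
where $2\sigma\in(0,\pi)$ records the cyclic positions of $[\mathcal A],[\mathcal D],[\mathcal A'],[\mathcal D']$ (equivalently the angle at $p$ between $\alpha$ and $\delta$), so $\sigma\in(0,\tfrac\pi2)$.

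The convexity inequality \eqref{eq:tocheck}, namely $A+D<B+C$, then reduces to
\[\frac{2\,e^{\frac{\widehat a+\widehat d}{2}}}{e^{\widehat a}+e^{\widehat d}}=\frac{1}{\cosh\frac{\widehat a-\widehat d}{2}}\ <\ \sin\sigma+\cos\sigma,\]
which is immediate: the left-hand side is $\le 1$, while $(\sin\sigma+\cos\sigma)^2=1+\sin 2\sigma>1$ for $\sigma\in(0,\tfrac\pi2)$. The degenerate values $\sigma\in\{0,\tfrac\pi2\}$ --- at which one would get only $\le$ --- correspond to a collapsed quadrilateral and are excluded; this is the analogue of the ``infimum not attained'' step of \S\ref{sec:comell}. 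This would prove Theorem~\ref{thm:parab}.

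The one genuinely new ingredient, and hence the delicate point, is the decorated-cusp bookkeeping: since the arcs $\alpha,\beta,\gamma,\delta$ now have infinite length, the finite half-lengths used in \S\ref{sec:loxo}--\S\ref{sec:comell} are unavailable, so one must verify that an $R$-invariant horocycle really yields clean exponential third coordinates and that the background facts invoked (Fact~\ref{fact:classical}, and the antipodality of a diameter's ideal endpoints) transfer verbatim to the lightlike normalisation. I would \emph{not} try to obtain Theorem~\ref{thm:parab} as the limit $\theta\to0$ of Theorem~\ref{thm:ellip}: after renormalising coordinates the limiting form of \eqref{hihi} degenerates to an equality when $\widehat a=\widehat d$, so the strictness genuinely has to be produced by the direct computation above.
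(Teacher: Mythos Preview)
Your proposal is correct and takes a genuinely different route from the paper. The paper proves Theorem~\ref{thm:parab} by treating the cusp as a limit of cone singularities: it exhausts the ideal quadrilateral $\Pi$ by compact quadrilaterals $\Pi_t$ (with vertices at distances $\overline a+t,\overline d+t,\overline a+t,\overline d+t$ from $p$ along the diagonals), invokes Theorem~\ref{thm:ellip} for each $\Pi_t$, and then shows that the member ratio of \eqref{hihi} stays bounded away from $1$ as $t\to+\infty$. This last step forces a case split: when $\overline a\neq\overline d$ the concavity estimate at the end of \S\ref{sec:comell} already gives a uniform margin, but when $\overline a=\overline d$ that estimate degenerates (exactly as you anticipate in your final paragraph) and the paper performs a separate explicit computation which amounts to your inequality $\sin\tfrac\theta2+\cos\tfrac\theta2>1$.

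Your direct lightlike computation is the natural third member of the spacelike/timelike/lightlike trichotomy begun in \S\ref{sec:loxo}--\S\ref{sec:comell}, and it handles both cases uniformly: the single inequality $1/\cosh\tfrac{\widehat a-\widehat d}{2}<\sin\sigma+\cos\sigma$ covers everything with no limit and no bifurcation. The paper's approach buys economy of setup (no horocyclic decoration to introduce), at the cost of a limiting argument whose strictness has to be rescued by hand in the symmetric case; your approach buys a cleaner, self-contained proof at the cost of setting up the lightlike normalisation. One small remark on that normalisation: what you really need is not merely that the decoration is $R$-invariant, but that it is invariant under the hyperelliptic involution of $S$ (so that \emph{every} lifted $\pi$-rotation, in particular the one about $p_\beta$, swaps $\mathcal A\leftrightarrow\mathcal D$ as vectors and not just as ideal points). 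This is automatic for any horocycle at the cusp, since the involution fixes the cusp; you might make that explicit.
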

\begin{proof}
The case of a cusp (parabolic commutator) can be recovered as a limit case of an elliptic commutator. Namely, given a one-cusped torus $S$ with arcs $\alpha, \beta, \gamma, \delta$ satisfying the combinatorics above, we can find a fundamental domain in $\mathbb{H}^2$ equal to an \emph{ideal} quadrilateral $\Pi$ whose diagonals intersect at~$p$. Denote by $p\mathcal{A}$, $p\mathcal{D}$, $p\mathcal{A}'$, $p\mathcal{D}'$ the diagonal rays issued from $p$, isometrically parameterized (respectively) by functions $m_\mathcal{A}$, $m_\mathcal{D}$, $m_{\mathcal{A}'}$, $m_{\mathcal{D}'}: [0,+\infty)\rightarrow \mathbb{H}^2$. Let $H\subset \mathbb{H}^2$ be the preimage of a fixed small horoball neighborhood of the cusp. Then there exist reals $\overline{a}, \overline{d}>0$ such that $\mathbb{H}^2\smallsetminus H$ contains exactly the initial segment $m_\mathcal{A}([0,\overline{a}])$ (resp.\ $m_\mathcal{D}([0,\overline{d}])$, $m_{\mathcal{A}'}([0,\overline{a}])$, $m_{\mathcal{D}'}([0,\overline{d}])$) of the ray $p\mathcal{A}$ (resp.\ $p\mathcal{D}$, $p\mathcal{A}'$, $p\mathcal{D}'$).

Given $t>0$, the quadrilateral 
$$\Pi_t:=\left ( 
m_\mathcal{A}(\overline{a}+t) \, ,\: 
m_\mathcal{D}(\overline{d}+t) \, ,\: 
m_{\mathcal{A}'}(\overline{a}+t) \, ,\: 
m_{\mathcal{D}'}(\overline{d}+t)  
\right )$$ 
has opposite edges of equal lengths. The isometries taking opposite edges of $\Pi_t$ to one another define a representation $\rho_t:\pi_1(S)\rightarrow \mathrm{PSL}_2(\mathbb{R})$ equal to the holonomy of a cone metric converging to the initial cusped metric as $t\rightarrow +\infty$. Let $a_t, b_t, c_t, d_t$ be the semi-arc lengths in this cone metric; in particular
$a_t=\overline{a}+t$ and $d_t=\overline{d}+t$.

The member ratio of \eqref{hihi} is
$$\frac{\left . \sinh \frac{a_t+d_t}{2}\right / \cosh \frac{a_t-d_t}{2}}{\sinh b_t + \sinh c_t} = 
\frac{\left . \sinh (\frac{\overline{a}+\overline{d}}{2}+t)\right / \cosh \frac{\overline{a}-\overline{d}}{2}}{\sinh b_t + \sinh c_t} <1.$$
To prove convexity of the strip map $\boldsymbol{f}$, we only need to bound this ratio away from $1$ (and take limits as $t\rightarrow +\infty$).
If $\overline{a}\neq\overline{d}$, this comes from the relationship $\sinh b_t + \sinh c_t \geq \sinh \frac{a_t+d_t}{2} + \sinh |\frac{a_t-d_t}{2}|$ proved at the end of  Section \ref{sec:comell}. If $\overline{a}=\overline{d}$, then up to permutation
\begin{eqnarray*}
 \cosh (2b_t) &=& \cosh^2 a_t-\sinh^2 a_t \, \cos \theta = 1+ \sinh^2 a_t \, (1-\cos \theta) \\
 \cosh (2c_t) &=& \cosh^2 a_t+\sinh^2 a_t \, \cos \theta = 1+ \sinh^2 a_t \, (1+\cos \theta) 
\end{eqnarray*}
where $\theta$ is the angle (independent of $t$) formed by the diagonals of $\Pi_t$, hence 
$$\textstyle{\sinh b_t + \sinh c_t = \sinh a_t \Big ( \sqrt{\frac{1-\cos \theta}{2}}+\sqrt{\frac{1+\cos \theta}{2}} \, \Big ) = \sinh a_t \, (\sin \frac{\theta}{2}+\cos \frac{\theta}{2}).}$$
Since $\sin \frac{\theta}{2}+\cos \frac{\theta}{2}>1$, this gives the desired bound.
\end{proof}

\section{Illustration}
\begin{figure}[ht!]
\labellist
\small\hair 2pt
%
\endlabellist
\centering
\includegraphics[width=12cm]{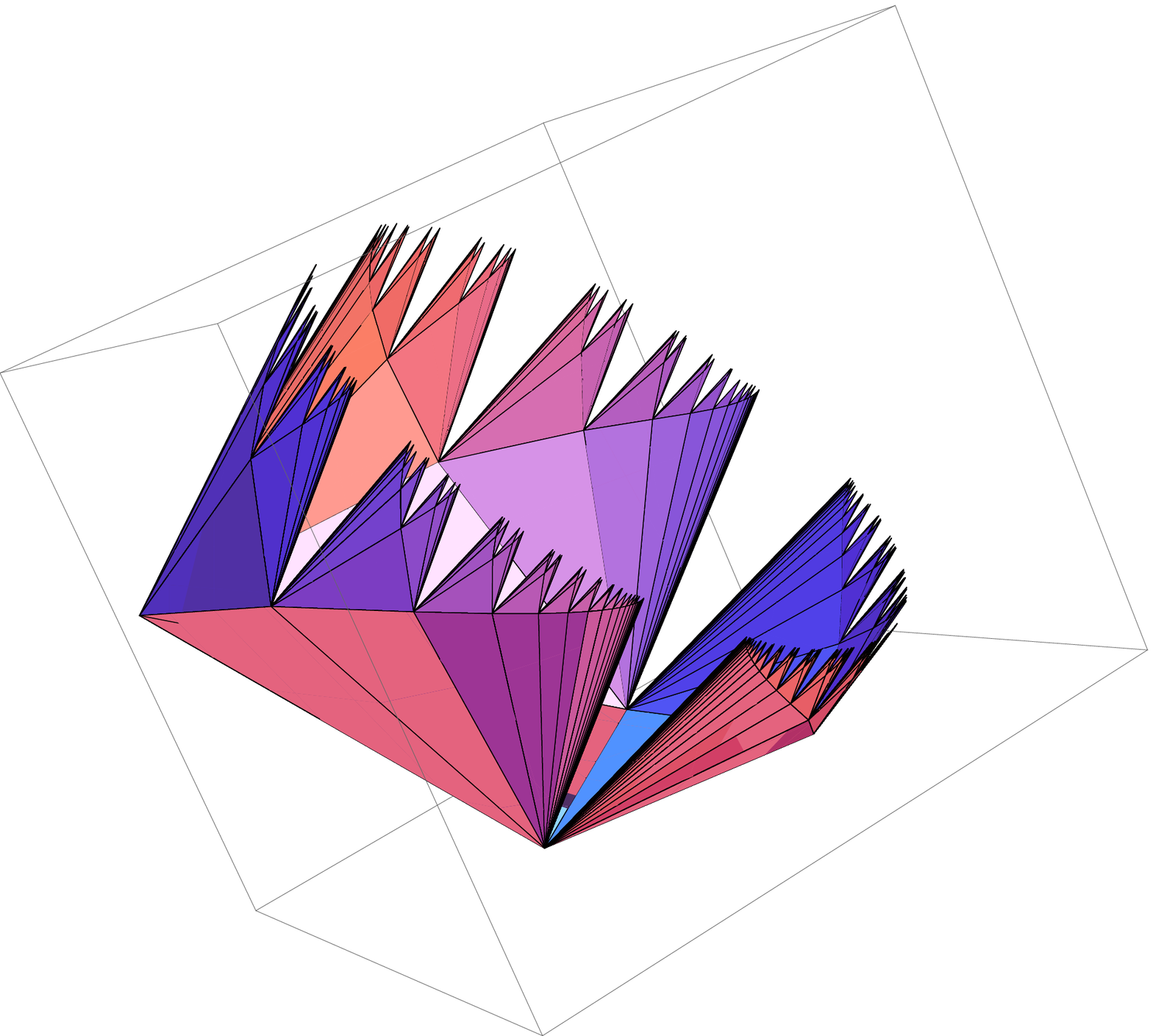}
\caption{}
\label{fig:Nappe} 
\end{figure}

Figure \ref{fig:Nappe} was made using the Mathematica software. It shows the image of the strip map $\boldsymbol{f}:\overline{X}\rightarrow T_{[g]}\mathcal{T}$ for a hyperbolic torus $(S,g)$ with a cone singularity, composed with a projective transformation $\Phi$ of the range $T_{[g]}\mathcal{T}$ sending the origin to infinity. This composition by $\Phi$ enables us to show the whole set $\boldsymbol{f}(\overline{X})$ (which is unbounded in $T_{[g]}\mathcal{T}$). The plane at infinity was sent by $\Phi$ to the plane containing the tips of all the ``teeth''. The gaps between the teeth are not an artefact; they actually grow wider for $g$ a genuine (not conical) hyperbolic metric on~$S$. Each triangular gap lies in a plane containing the point $\Phi(0)$ at infinity.

\vspace{0.5cm}

\end{document}